\renewcommand{\epsilon}{\varepsilon}
\DeclareMathOperator{\dvg}{div} 
\DeclareMathOperator{\dist}{dist} 
\DeclareMathOperator{\diam}{diam}
  \DeclareMathOperator{\proj}{proj}
  \DeclareMathOperator{\area}{Area}
  \DeclareMathOperator{\length}{Length}
    \DeclareMathOperator{\inj}{Inj}
\def\R{\mathbb{R}}
\def\d{\delta}
\def\a{\alpha}
\def\e{\epsilon}
\def\r{\rho}
\def\s{\sigma}
\def\o{\omega}
\def\l{\lambda}
\def\wt{\widetilde}
\def\B{\mathcal{B}}
\def\Si{\Sigma}
\newtheorem{theorem}{Theorem}[section]
\newtheorem{lemma}[theorem]{Lemma}
\newtheorem{remark}[theorem]{Remark}
\newtheorem{corollary}[theorem]{Corollary}
\newcommand{\qed}{\hfill \ensuremath{\Box}}
\newenvironment{proof}{\smallskip\noindent{\it Proof.}\hskip \labelsep}
                          {\hfill\penalty10000\raisebox{-.09em}{$\Box$}\par\medskip}
\begin{document}

\begin{title}
{Curvature estimates for surfaces with bounded mean curvature}
\end{title}
\begin{author}
{Theodora Bourni \and Giuseppe Tinaglia~\footnote{Partially supported by The Leverhulme Trust and EPSRC grant no. EP/I01294X/1}}
\end{author}
\date{}
\maketitle
\vspace{-1cm}

\begin{abstract}
Estimates for the norm of the second fundamental form, $|A|$, play a crucial role in studying the geometry of surfaces in $\mathbb{R}^3$. In fact, when $|A|$ is bounded the surface cannot bend too sharply. In this paper we prove that for an embedded geodesic disk with bounded $L^2$ norm of $|A|$, $|A|$ is bounded at interior points, provided that the $W^{1,p}$ norm of its mean curvature is sufficiently small, $p>2$. In doing this we generalize some renowned estimates on $|A|$ for minimal surfaces. \end{abstract}

\section{Introduction.}

In the study of the geometry of surfaces in $\mathbb{R}^3$, estimates for the norm of the second fundamental form, $|A|$, are particularly remarkable. In fact, when $|A|$ is bounded the surface cannot bend too sharply and thus such estimates provide a very satisfying description of its local geometry. When a surface $\Sigma$ is minimal $|A|^2=-2K_\Sigma$, $K_\Sigma$ being the Gaussian curvature, and such estimates are then known as curvature estimates. There are many results in the literature where curvature estimates for minimal surfaces are obtained assuming certain geometric conditions, see for instance~\cite{cs1,cm22,cm23,he1,sc3,ss1,ssy1,tin3} et al.. In~\cite{cm23}, Colding and Minicozzi prove that an embedded geodesic minimal disk with  bounded  $L^2$ norm of $|A|$, {\it bounded total curvature}, has curvature bounded in the interior.

The main result in this paper is the following estimate that generalizes the curvature estimate in~\cite{cm23} to a broader class of surfaces.

\begin{theorem}\label{main}
Given $C_1$ and $p\ge 2$, there exist $C_2=C_2(p, C_1)\ge0$ and $\e_p=\e_p(C_1)>0$ such that the following holds. Let $\Sigma$ be a surface embedded in $\mathbb{R}^3$ containing the origin with $\inj_\Sigma(0)\geq s >0$,
\[
\int_{\B_{s}}|A|^2\le C_1
\]
and either
\begin{itemize}
\item[i.] $  \|H\|^*_{W^{2,2}(\B_{s})}\le\e_2, $ if $p=2,$ or
\item[ii.] $\|H\|^*_{W^{1,p}(\B_{s})}\le\e_p,$ if $p>2$,
\end{itemize}
then
\[
|A|^2(0)\le C_2s^{-2}.\]
\end{theorem}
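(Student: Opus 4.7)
I would argue by contradiction via a blow-up analysis, reducing to the uniqueness of the plane among complete, simply connected, embedded minimal surfaces in $\R^3$ of finite total curvature. Suppose the conclusion fails. Then there is a sequence of embedded surfaces $\Sigma_n$ satisfying every hypothesis with $|A_n|^2(0)\,s_n^2\to\infty$; rescaling the ambient metric by $1/s_n$ we may assume $s_n=1$, so that $\int_{\B_1}|A_n|^2\le C_1$, $\|H_n\|^*_{W^{1,p}(\B_1)}\le\e_p$, $\inj_{\Sigma_n}(0)\ge 1$, and $|A_n|(0)\to\infty$.

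Next I would invoke a standard point-picking lemma on $\B_{1/2}$ to produce $p_n\in\Sigma_n\cap\B_{1/2}$ and $r_n\in(0,1/4)$ with $\l_n:=|A_n|(p_n)\to\infty$, $\l_n r_n\to\infty$, and $\sup_{\B_{r_n}(p_n)}|A_n|\le 2\l_n$. Since $\inj_{\Sigma_n}(0)\ge 1$, every $q\in\B_{1/2}$ satisfies $\inj_{\Sigma_n}(q)\ge 1/2$, so $\B_{r_n}(p_n)$ is a genuine geodesic disk. Rescaling by $\l_n$ (and translating so $p_n$ lies at the origin), I obtain $\tilde\Sigma_n$ with $|\tilde A_n|(0)=1$, $|\tilde A_n|\le 2$ on the intrinsic ball of radius $\l_n r_n\to\infty$ about $0$, injectivity radius at $0$ at least $\l_n/2\to\infty$, and the same, scale-invariant $L^2$ curvature bound.

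The third step is to see that the mean curvature tends to zero after rescaling. This is where the scale invariance of $\|\cdot\|^*_{W^{1,p}}$ is essential: the norm of $\tilde H_n$ on a fixed ball $\B_R\subset\tilde\Sigma_n$ equals the norm of $H_n$ on the shrinking ball $\B_{R/\l_n}\subset\Sigma_n$. For $p>2$, Morrey's embedding bounds $H_n$ in $L^\infty$; combining this with H\"older on the $L^p$ term and absolute continuity on the $L^p$ norm of $\nabla H_n$, each piece of the scale-invariant norm on $\B_{R/\l_n}$ tends to $0$, so $\tilde H_n\to 0$ in $L^\infty$ on compact sets (the case $p=2$ with the $W^{2,2}$ assumption is analogous via the two-dimensional embedding $W^{2,2}\hookrightarrow C^0$). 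Combined with $|\tilde A_n|\le 2$, standard elliptic estimates for the graph equation satisfied by $\tilde\Sigma_n$ over small disks in tangent planes give smooth convergence, on compact subsets and after a subsequence, to a complete embedded minimal surface $\Sigma_\infty\subset\R^3$ with $|A_\infty|(0)=1$, $\int_{\Sigma_\infty}|A_\infty|^2\le C_1$, and infinite injectivity radius at $0$.

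Infinite injectivity radius at $0$ forces $\exp_0:T_0\Sigma_\infty\to\Sigma_\infty$ to be a global diffeomorphism, so $\Sigma_\infty$ is simply connected; by the Chern--Osserman classification the only complete, simply connected, embedded minimal surface in $\R^3$ of finite total curvature is the plane, which contradicts $|A_\infty|(0)=1$. The main difficulty, as is typical for arguments of this kind, lies in the third step: upgrading weak varifold compactness of the $\tilde\Sigma_n$ to smooth, embedded, single-sheeted convergence, so that the limit is a classical minimal surface with $|A|=1$ at the origin rather than a multi-sheeted plane or a smooth minimal lamination whose leaves carry no curvature. This is precisely where the embeddedness hypothesis and the $W^{1,p}$-smallness of $H$ (providing an almost-minimal graphical structure near any point of large curvature) become indispensable.
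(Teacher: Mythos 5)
Your proposal takes a genuinely different route from the paper: a soft blow-up/compactness argument reducing to the uniqueness of the plane among complete, simply connected, embedded minimal surfaces of finite total curvature, versus the paper's direct quantitative proof. The paper first establishes a Choi--Schoen type estimate (small total curvature together with small Sobolev norm of $H$ forces a pointwise bound on $|A|$, proved via Simons' inequality and a generalized mean-value inequality), then uses a pigeonhole argument to find an annulus of small total curvature, a Gauss--Bonnet/cylinder-slice argument to bound $\int_{\B_{2s}}|A|^2$, and finally an area--total-curvature comparison to remove an auxiliary $L^2$-smallness assumption on $H$. Your approach is conceptually shorter and trades explicit constants for a classification theorem; this is a legitimate alternative strategy. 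The blow-up set-up, the passage to a minimal limit for $p>2$ (and $W^{2,2}$ for $p=2$), and the appeal to embeddedness to rule out Enneper-type limits are all essentially sound.

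However, there is a concrete gap in your second step. You assert that since $\inj_{\Sigma_n}(0)\geq 1$, every $q\in\B_{1/2}$ satisfies $\inj_{\Sigma_n}(q)\geq 1/2$, and from this you deduce that the blow-up limit has infinite injectivity radius at the base point and is therefore simply connected --- which is exactly what you need to invoke the classification. This implication is false: $\inj_{\Sigma_n}(0)\geq 1$ guarantees only that $\B_1(0)$ is a topological disk; interior points of a geodesic disk can still carry arbitrarily short geodesic loops when the Gaussian curvature is allowed to be large and positive, which it is here prior to the blow-up. Since simple connectedness of the limit is the linchpin of the classification step (embeddedness and finite total curvature alone do not rule out a catenoid when $C_1$ is large), this is a genuine gap, not a cosmetic one.

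The gap is repairable, and interestingly the repair uses the same Gauss--Bonnet mechanism that drives the paper's proof. A geodesic loop at $p_n$ of length $\ell_n\to 0$ lies in the disk $\B_1(0)$ and therefore bounds a disk $D_n$; Gauss--Bonnet at the corner gives $\int_{D_n}K_{\Sigma_n}>\pi$. But $2K_{\Sigma_n}=H_n^2-|A_n|^2\le H_n^2$, so $\int_{D_n}K_{\Sigma_n}\le \tfrac12\int_{\B_1}H_n^2$, which is forced small by the $W^{1,p}$ (or $W^{2,2}$) hypothesis together with H\"older and the bound on $\mathrm{area}(\B_1)$ coming from $\int_{\B_1}|A_n|^2\le C_1$. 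Choosing $\e_p$ small enough to make the right side less than $\pi$ rules out such loops, hence rules out geodesic loops at the origin in the blow-up limit. Since the limit is minimal, $K_\infty\le 0$ and there are no conjugate points, so $\exp_0$ is a global diffeomorphism and $\Sigma_\infty$ is simply connected. You should insert this Gauss--Bonnet loop-exclusion argument explicitly; without it, the proof does not close. You should also replace the claim about injectivity radius propagating to nearby points, which is incorrect as stated.
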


Here, for any $x\in \Sigma$, $\inj_\Sigma(x)$ denotes the injectivity radius of $\Sigma$ at $x$. For any $s>0$,  $\B_s$ denotes the intrinsic ball of radius $s$ centered at the origin and $\|H\|^*_{W^{1,p}(\B_{s})}$ ($\|H\|^*_{W^{2,2}(\B_{s})}$) denotes the scale invariant $W^{1,p}$ ($W^{2,2}$ respectively) norm of the mean curvature, see beginning of Section~\ref{chs} for a precise definition.

The structure of this paper, that is the proof of Theorem \ref{main}, is as follows:
In Section~\ref{chs} we generalize the renowned curvature estimate by Choi and Schoen~\cite{cs1}, Theorem \ref{choischoen}. We also show that the hypotheses of Theorem~\ref{main} are optimal, Remark~\ref{rmk}. In Section~\ref{CMgen} we use this estimate to prove case (i) of Theorem~\ref{main} and also case (ii) of Theorem~\ref{main} but with the additional assumption that the $L^2$ norm of the mean curvature is small, Theorem~\ref{varofmain}. Finally in Section~\ref{elp}, we show some relation between the total curvature and the area of an intrinsic ball, which allows us to remove this extra assumption and thus finish the proof of case (ii) of Theorem~\ref{main}, Remark~\ref{rmk2}. This relation also enables us to replace the bound on the total curvature in Theorem~\ref{main} with an area bound, Corollary~\ref{cor}.

\section{Choi-Schoen curvature estimate generalized}\label{chs}

The Choi-Schoen curvature estimate \cite{cs1} says that if the total curvature of an intrinsic minimal disk is sufficiently small, then the curvature of the disk is bounded in the interior and it decays like the inverse square of the distance of the point to the boundary. The goal of this section is to generalize the Choi-Schoen curvature estimate.

Throughout this paper
$\|H\|^*_{W^{1,p}(\B_{s})}$ and $\|H\|^*_{W^{2,2}(\B_{s})}$ will denote the scale invariant $W^{1,p}$ and $W^{2,2}$ respectively norm of the mean curvature, i.e.
\[\|H\|^*_{W^{1,p}(\B_{s})}:=s^{p-2}\int_{\B_{s}}|H|^p+s^{2p-2}\int_{\B_{s}}|\nabla H|^p\]
and
$$\|H\|^*_{W^{2,2}(\B_{s})}:=\int_{\B_{s}}|H|^2+s^2\int_{\B_{s}}|\nabla H|^2+s^4\int_{\B_{s}}|\nabla^2 H|^2.$$
Furthermore
the letter $c$ will denote an absolute constant. When different constants appear in the course of a proof we will keep the same letter
$c$ unless the constant depends on some different parameters.

\begin{theorem}\label{choischoen}

 Given $p\ge 2$, there exists $\e_0=\e_0(p)>0$ such that the following holds. Let $\Si$ be a surface immersed in $\R^3$ containing the origin and $\B_{r_0}\subset \Si$, $r_0>0$. If there exists $\delta\in[0,1]$ such that

 $$\int_{\B_{r_0}}|A|^2\le\delta\e_0$$
and either
\begin{itemize}
\item[i.] $\|H\|^*_{W^{2,2}(\B_{r_0})}\le\delta\e_0$, if $p=2$,
 or
\item[ii.] $\|H\|^*_{W^{1,p}(\B_{r_0})}\le(\delta\e_0)^{p/2}$, if $p>2$,
\end{itemize}
then for all $0<\s\le r_0$ and $y\in \B_{r_0-\s}$
\[\s^2|A|^2(y)\le\d.\]
\end{theorem}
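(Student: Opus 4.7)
The plan is to argue by contradiction via the classical Choi--Schoen point-picking trick, combined with a mean-value inequality for $|A|^2$ in which the usual Simons term for minimal surfaces is supplemented by lower-order contributions from $H$, $\nabla H$, and $\nabla^2 H$.

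Suppose the conclusion fails: there exist $\sigma_0\in(0,r_0]$ and $y_0\in\B_{r_0-\sigma_0}$ with $\sigma_0^2|A|^2(y_0)>\d$. The function $F(y):=(r_0-d_\Si(0,y))^2|A|^2(y)$, defined on $\ov{\B_{r_0}}$, vanishes on $\partial\B_{r_0}$ yet satisfies $F(y_0)>\d$, so it attains its maximum at an interior point $y^*$. Setting $\sigma_*:=r_0-d_\Si(0,y^*)$ and $\lambda:=|A|(y^*)$, one has $\lambda^2\sigma_*^2>\d$; comparing $F(y)\le F(y^*)$ throughout $\B_{\sigma_*/2}(y^*)$ yields $|A|^2\le 4\lambda^2$ there. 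Rescaling $\Si$ by the factor $\lambda$, we obtain a surface $\wt\Si$ on which $|\wt A|^2(y^*)=1$, $|\wt A|^2\le 4$ on an intrinsic ball of radius $\tilde\r:=\lambda\sigma_*/2\ge\sqrt{\d}/2$, and on which the scale-invariant quantities $\int|A|^2$, $\|H\|^*_{W^{1,p}}$, and $\|H\|^*_{W^{2,2}}$ are preserved.

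On such a ball one derives a mean-value inequality of the form
\[
|\wt A|^2(y^*)\le C\left(\tilde\r^{-2}\int_{\wt\B_{\tilde\r}(y^*)}|\wt A|^2+\Phi(\|\wt H\|^*)\right),
\]
with $\Phi$ an explicit function satisfying $\Phi(t)\to 0$ as $t\to 0^+$. The derivation rests on Simons' identity
\[
\tfrac12\Delta|A|^2=|\nabla A|^2-|A|^4+H\,\Tr(A^3)+A_{ij}\nabla^2_{ij}H,
\]
which produces a differential inequality for $|A|$ with explicit $H$-corrections; testing it against $|A|^{2q-2}\varphi^2$ cutoffs and Moser-iterating in $q$ gives the displayed bound, the uniform $L^\infty$ control of $|\wt A|$ on a ball of definite radius being what makes the iteration close. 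Inserting the hypotheses then yields $1=|\wt A|^2(y^*)\le C\d\e_0$, a contradiction for $\e_0=\e_0(p)$ sufficiently small.

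The main technical difficulty is the estimation of the mean-curvature corrections during the iteration, and this is what accounts for the distinct hypotheses in the two cases. In case (i), $p=2$, a single integration by parts of the $A_{ij}\nabla^2_{ij}H$ term does not cleanly absorb the remaining $\nabla^2 H$ factor uniformly in $q$, so this contribution is instead dominated directly via $\|\nabla^2 H\|_{L^2}$, which is exactly the extra information encoded in the $W^{2,2}$ hypothesis. In case (ii), $p>2$, the two-dimensional Sobolev embedding $W^{1,p}\hookrightarrow L^\infty$ turns the $H$ and $\nabla H$ integrals into pointwise bounds, so only $W^{1,p}$ control is needed; the power $(\d\e_0)^{p/2}$ in the hypothesis corresponds to $\Phi(t)\sim t^{2/p}$, which is precisely what is required for the $p$-th powers arising in the Sobolev estimates to convert back into the final contradictory inequality $1\le C\d\e_0$.
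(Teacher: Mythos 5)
Your overall strategy — the Choi--Schoen point-picking argument, rescaling at the maximum of $(r_0-r)^2|A|^2$, a mean-value inequality based on Simons' identity, and a contradiction for $\e_0$ small — does match the spirit of the paper's proof. But there are two substantive problems.

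First, the handling of the term $A_{ij}\nabla^2_{ij}H$ in case (ii) has a genuine gap. The $W^{1,p}$ hypothesis controls $H$ and $\nabla H$ but gives no information whatsoever about $\nabla^2 H$; yet $\nabla^2 H$ appears explicitly in the Ecker--Huisken Simons inequality and must be eliminated or estimated. In case (i) you correctly note that $\|\nabla^2 H\|_{L^2}$ is available, but in case (ii) you invoke only $W^{1,p}\hookrightarrow L^\infty$, which says nothing about $\nabla^2 H$. The paper deals with this for \emph{both} cases by integrating $\int (r^2-|x|^2) h_{ij}\nabla_i\nabla_j H$ by parts once and using Codazzi ($\nabla_i h_{ij} = \nabla_j H$), which converts the $\nabla^2 H$ term into $|\nabla H|^2$ plus $|A||\nabla H|$; only after that do cases (i) and (ii) diverge, with (i) controlled through the divergence-theorem estimate of Remark~\ref{c2 estimate} (needing $\|\nabla^2 H\|_{L^2}$) and (ii) controlled through H\"older with the area bound ($r^{-1}\int_{B_r}|\nabla H|^2 \le c r^{-2/p}\|H\|_{W^{1,p}}^{2/p}$). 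Without this integration by parts, case (ii) does not close; and as stated, your claim that Sobolev embedding ``turns the $H$ \emph{and} $\nabla H$ integrals into pointwise bounds'' is simply false for $\nabla H$, since $W^{1,p}$ with $p>2$ puts $H$ (not $\nabla H$) in $L^\infty$, and $\nabla H$ remains only in $L^p$.

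Second, the mechanism you propose for the mean-value inequality — testing against $|A|^{2q-2}\varphi^2$ and Moser-iterating in $q$ — is a genuinely different (and harder to close) route than the one in the paper. The paper uses a single-pass mean-value inequality derived from the monotonicity formula (Lemmas~\ref{gmean} and~\ref{GMVin}), with no iteration; the constants $c_1, c_2, c_3$ enter once and the conclusion is immediate. In a Moser iteration, the integration by parts of $A_{ij}\nabla^2_{ij}H$ against $|A|^{2q-2}\varphi^2$ produces $q$-dependent cross terms involving $\nabla|A|$ and $\nabla H$ whose absorption into the good $|\nabla|A|^q|^2$ term is delicate and must be made uniform in $q$; you flag this difficulty yourself for case (i) but do not say how it is resolved, nor do you say how a surface Sobolev inequality with scale-invariant constants is obtained on the rescaled domain. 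If you prefer to stay with Moser iteration you should address both points; otherwise, the monotonicity-based route of the paper avoids these issues entirely. Finally, a small bookkeeping slip: with your normalization $|\wt A|^2(y^*)=1$ and $\tilde\r\ge\sqrt{\d}/2$, the mean-value bound gives $\tilde\r^{-2}\int_{\B_{\tilde\r}}|\wt A|^2\le 4\e_0$, so the contradiction should read $1\le C\e_0$ rather than $1\le C\d\e_0$ — still a contradiction, but the $\d$ should not appear.
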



 In order to demonstrate Theorem \ref{choischoen}, we begin by proving certain results about manifolds that are not necessarily minimal. In particular we prove a Generalized Mean Value Property, Lemma \ref{gmean} and a Generalized Mean Value Inequality, Lemma \ref{GMVin}. See   Proposition 1.16 and Corollary 1.17 in~\cite{cmCourant} for a proof of these results in the minimal case.
 
 For any $s>0$, let $B_s$ denote the extrinsic ball of radius $s$ in $\R^n$ centered at the origin.
\begin{lemma}[Generalized Mean Value Property]\label{gmean}
Let $\Sigma$ be a $k$-dimensional manifold immersed in $\R^n$ and containing the origin and let $f$ be a non-negative $C^1$ function on $\Si$ then

\begin{equation}\label{GMVdereq}
\frac{d}{dr}\left(r^{-k}\int_{B_r\cap\Si} f\right)=\frac{d}{dr}\int_{B_r\cap\Si} f\frac{|x^N|^2}{|x|^{k+2}}+r^{-k-1}\int_{B_r\cap \Si}x\cdot(\nabla f+f H),\end{equation}
where $x^N$ denotes the normal component of $x$, and for $0<s<t$

\begin{equation}\label{GMVeq}\begin{split}t^{-k}\int_{B_t\cap\Si}f  -s^{-k}\int_{B_s\cap\Si}f= &\int_{(B_t\setminus B_s)\cap \Si} f\frac{|x^N|^2}{|x|^{k+2}}+ \int^t_sr^{-k-1}\int_{B_r\cap \Si}x\cdot(\nabla f+f H).\end{split}\end{equation}

\end{lemma}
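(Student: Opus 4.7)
\smallskip\noindent\textbf{Proof plan.} The natural approach is to apply the first variation formula for $k$-submanifolds to the radial vector field $X=fx$ on the domain $B_r\cap\Si$, and rewrite the resulting boundary contribution via the coarea formula with $u(x)=|x|$, for which $|\nabla^\Si u|=|x^T|/|x|$; the outward unit conormal to $\partial(B_r\cap\Si)$ inside $\Si$ is $\eta=x^T/|x^T|$, so $x\cdot\eta=|x^T|$.

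A direct computation yields $\dvg_\Si(fx) = f\,\dvg_\Si(x) + x\cdot\nabla f = kf + x\cdot\nabla f$, using $\dvg_\Si(x)=k$ and the tangentiality of $\nabla f$. Applying the first variation identity
\[
\int_{B_r\cap\Si}\dvg_\Si X \;=\; \int_{\partial B_r\cap\Si} X\cdot\eta \;-\; \int_{B_r\cap\Si} X\cdot H
\]
to $X=fx$ and rearranging produces
\[
k\int_{B_r\cap\Si}f \;=\; \int_{\partial B_r\cap\Si} f|x^T| \;-\; \int_{B_r\cap\Si} x\cdot(\nabla f+fH). \qquad (\star)
\]

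Next, coarea applied with $u=|x|$ gives, for a.e.\ $r$, the two slicing identities
\[
\frac{d}{dr}\int_{B_r\cap\Si} f \;=\; \int_{\partial B_r\cap\Si}\frac{f\,r}{|x^T|}, \qquad \frac{d}{dr}\int_{B_r\cap\Si}\frac{f|x^N|^2}{|x|^{k+2}} \;=\; \frac{1}{r^{k+1}}\int_{\partial B_r\cap\Si}\frac{f|x^N|^2}{|x^T|},
\]
using $|x|=r$ on $\partial B_r$. Now differentiate $r^{-k}\int_{B_r\cap\Si}f$ by the product rule, substitute $(\star)$ for the term $-kr^{-k-1}\int f$, and combine the two resulting boundary contributions via the identity $r^2-|x^T|^2=|x^N|^2$ on $\partial B_r$: this collapses to exactly \eqref{GMVdereq}. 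The integrated form \eqref{GMVeq} follows by integrating in $r\in(s,t)$ and applying the fundamental theorem of calculus.

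The only subtlety is regularity of the slices: $\partial B_r\cap\Si$ is a smooth $(k-1)$-submanifold only for a.e.\ $r$, by Sard's theorem applied to $u=|x||_\Si$, which suffices to derive the differential identity almost everywhere and hence the integrated version \eqref{GMVeq}. To bypass slicing entirely one may instead test the first variation against the smoothly cut off vector field $\psi(|x|/r)\,f\,x$, differentiate the resulting identity in $r$, and let $\psi\to\chi_{[0,1]}$; the boundary-term derivatives above reappear naturally from the derivative of $\psi$, yielding the same conclusion.
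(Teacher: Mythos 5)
Your argument is correct but takes a genuinely different route from the paper's. You apply the first variation identity with boundary term directly to $X=fx$ on $B_r\cap\Si$, then compute the two radial derivatives via the coarea formula with $u=|x|$ (so $|\nabla^\Si u|=|x^T|/|x|$), and combine the boundary contributions using $r^2-|x^T|^2=|x^N|^2$ on $\partial B_r\cap\Si$; this yields \eqref{GMVdereq} for a.e.\ $r$, and \eqref{GMVeq} by integration. The paper instead tests the closed first-variation formula $\int\dvg_\Si X=-\int X\cdot H$ against the smoothly cut-off field $\gamma(|x|)f(x)x$, obtains a mollified identity in $\phi=\gamma(\cdot\, r)$, and then lets $\phi$ increase to the characteristic function of $(-\infty,1)$ (following equation 18.1 of Simon's lecture notes). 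The slicing route is more geometrically transparent — you see directly how the boundary terms from Gauss--Green and from coarea cancel up to the $|x^N|^2$ defect — but requires a Sard-type regularity remark, which you correctly flag. The cutoff route keeps everything smooth until a final monotone-convergence limit and thus gives the identity for every $r$ with no a.e.\ caveat, at the cost of more opaque bookkeeping of the terms coming from $\phi'$. You also note the cutoff method as an available alternative; both derivations are standard and mutually consistent, and every step in your primary computation checks out.
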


\begin{proof}
Using the formula
\[\int \dvg_\Si X =-\int X\cdot H\]
with the vector field $X(x)=\gamma(|x|)f(x)x$, where $\gamma\in C^1(\R)$ is such that, for some $r>0$,
$\gamma(t)=1$ for $t\le r/2$, $\gamma(t)=0$ for $t\ge r$ and $\gamma'(t)\le 0$, we get
\[ \frac{d}{dr}\left(r^{-k}\int_{B_r\cap\Si}\phi\left(\frac{|x|}{r}\right)f\right)= r^{-k}\frac{d}{dr}\int_{B_r\cap\Si} f\frac{|x^N|^2}{|x|^{2}}\phi\left(\frac{|x|}{r}\right) +r^{-k-1}\int_{B_r\cap \Si}x\cdot(\nabla f+f H)\phi\left(\frac{|x|}{r}\right) \]
where $\phi:\R\to\R$ is defined by $\phi(|x|/r)=\gamma(|x|)$ (cf.  equation 18.1 in~\cite{si1}).
Then \eqref{GMVdereq} follows after letting $\phi$ in the above formula increase to the characteristic function of $(-\infty,1)$ and \eqref{GMVeq} follows by integrating \eqref{GMVdereq} from $s$ to $t$.
\end{proof}

\begin{remark}\label{GMVrmk}The leading terms on the RHS of both \eqref{GMVdereq} and \eqref{GMVeq} in \emph{Lemma \ref{GMVdereq}} are positive. For the second term on the RHS of \eqref{GMVeq} we note that
 for any $C^1$ function $h$ on $\Si$, integration by parts yields
\begin{equation*}\label{ibp trick}\int_s^t r^{-k-1}\int_{B_r\cap\Si} h=\frac 1k\int_{B_t\cap\Si}h\left(\frac{1}{r_s^k}-\frac{1}{t^k}\right)\end{equation*}
where $r_s=\max\{|x|,s\}$,
and furthermore
\[x\cdot\nabla f=\proj_{T_x\Si}x\cdot\nabla f=\frac 12 \nabla|x|^2\cdot\nabla f=-\frac 12 \nabla(r^2-|x|^2)\cdot\nabla f.\]
Thus, integrating by parts, we get the following two estimates, as a corollary of \emph{Lemma~\ref{GMVdereq}}, which we will need later:
\begin{equation}\label{gmean1}
\frac{d}{dr}\left(r^{-k}\int_{B_r\cap\Si}f\right)\ge r^{-k-1}\int_{B_r\cap\Si}f x\cdot H
+\frac 12r^{-k-1}\int_{B_r\cap \Si}(r^2-|x|^2)\Delta_\Si f
\end{equation}
and
\begin{equation}\label{gmean2}
 t^{-k}\int_{B_t\cap\Si}f  -s^{-k}\int_{B_s\cap\Si}f\ge \frac 1k\int_{B_t\cap\Si}f x\cdot H\left(\frac{1}{r_s^k}-\frac{1}{t^k}\right) +\frac 1k\int_{B_t\cap \Si}x\cdot\nabla f\left(\frac{1}{r_s^k}-\frac{1}{t^k}\right)
\end{equation}
where $r_s=r_s(x)=\max\{|x|,s\}$.
\end{remark}

\begin{remark}\label{c2 estimate}
In the case of a surface immersed in $\R^3$ we can use \emph{Remark \ref{GMVrmk}} to estimate the ratios $s^{-1}\int_{B_s\cap\Si} f$
as follows:
In inequality \eqref{gmean2} of \emph{Remark \ref{GMVrmk}} let $n=3$, $k=2$, $t=1$. After multiplying  by $s$, since $s, |x|\le r_s$ we get the following:
\begin{equation*}
s^{-1}\int_{B_s\cap\Si}f\le \int_{B_1\cap\Si}f+\frac 12\int_{B_1\cap\Si} f |H|+\frac 12 \int_{B_1\cap\Si}|\nabla f|.
\end{equation*}
\end{remark}

Using Lemma \ref{gmean} we obtain the following Generalized Mean Value Inequality.
\begin{lemma}[Generalized Mean Value Inequality]\label{GMVin}
 Let $\Sigma$ be a  hyper-surface immersed in $\R^n$ containing the origin and such that $ B_1(0)\cap\partial\Si=\emptyset$. Let also $f$ be a non-negative function on $\Si$ such that
\begin{equation}\label{laplacian}
\Delta_\Si f\ge-\l_1 f-h
\end{equation}
for some $\lambda_1\geq 0$ and a function $h\in L^1(\Si\cap B_1(0))$ satisfying the following: There exist constants $c_2, c_3$ and $\a \in [0,1)$ such that
\[\frac12r^{-n}\int_{B_r\cap \Si}(r^2-|x|^2)h\le  c_2 r^{-\a}+c_3,\,\forall r\in(0,1] .\]
Then
\begin{equation*}\label{mean value inequality}
f(0)\le\o_{n-1}^{-1} e^{c_1}\int_{B_1\cap\Si} f+\o_{n-1}^{-1}\left(\frac{c_2}{1-\a}+c_3\right)e^{c_1}
\end{equation*}
where $\o_{n-1}$ is the volume of the unit ball in $\R^{n-1}$ and
$c_1=\sup_{\Si\cap B_1(0)} |H|+\frac {\l_1}{ 2}$.
\end{lemma}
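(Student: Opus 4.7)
The plan is to exploit the differential inequality \eqref{gmean1} from Remark \ref{GMVrmk}, turn it into a first-order ODE lower bound for the monotonicity-type quantity
\[
g(r) := r^{-(n-1)}\int_{B_r\cap\Si} f,
\]
and then recover $f(0)$ as $\lim_{r\to 0^+}\omega_{n-1}^{-1}g(r)$. Since $\Sigma$ is a smooth hypersurface through the origin (and $B_1\cap\partial\Sigma=\emptyset$), one has $\int_{B_r\cap\Sigma}f\sim \omega_{n-1}r^{n-1}f(0)$ as $r\to 0^+$, so $g(0^+)=\omega_{n-1}f(0)$.

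First I would apply \eqref{gmean1} with $k=n-1$, obtaining
\[
g'(r)\ge r^{-n}\int_{B_r\cap\Si}fx\cdot H+\tfrac12 r^{-n}\int_{B_r\cap\Si}(r^2-|x|^2)\Delta_\Si f.
\]
The mean curvature term is bounded below by $-\sup_{\Si\cap B_1}|H|\cdot g(r)$ using $|x|\le r$ and $f\ge 0$. For the Laplacian term, insert the hypothesis $\Delta_\Si f\ge -\lambda_1 f-h$; the $f$-contribution, using $r^2-|x|^2\le r^2\le r$ (since $r\le 1$), gives at least $-\tfrac{\lambda_1}{2}g(r)$, while the $h$-contribution is controlled by assumption by $-c_2 r^{-\alpha}-c_3$. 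Combining everything yields the scalar differential inequality
\[
g'(r)\ge -c_1 g(r)-c_2 r^{-\alpha}-c_3,\qquad r\in(0,1],
\]
with $c_1=\sup_{\Si\cap B_1}|H|+\lambda_1/2$.

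Next, I would solve this via the integrating factor $e^{c_1 r}$: rewriting the inequality as
\[
\frac{d}{dr}\bigl(e^{c_1 r}g(r)\bigr)\ge -e^{c_1 r}\bigl(c_2 r^{-\alpha}+c_3\bigr),
\]
and integrating from $s$ to $1$ for $0<s<1$ gives
\[
e^{c_1 s}g(s)\le e^{c_1}g(1)+\int_s^1 e^{c_1 r}\bigl(c_2 r^{-\alpha}+c_3\bigr)\,dr
\le e^{c_1}\Bigl(g(1)+\tfrac{c_2}{1-\alpha}+c_3\Bigr),
\]
where I use $\alpha<1$ to integrate $r^{-\alpha}$ and bound $e^{c_1 r}\le e^{c_1}$ on $[0,1]$. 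Letting $s\to 0^+$ and using $g(s)\to \omega_{n-1}f(0)$ yields the desired inequality after dividing by $\omega_{n-1}$.

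The only delicate step is turning \eqref{gmean1} into a clean differential inequality in $g$; the key observations are that the factor $r^2-|x|^2$ is at most $r^2$, and that for $r\le 1$ we can afford to throw one factor of $r$ into constants in order to get a bound purely in terms of $g(r)$ and the hypothesized control of $h$. After that, the ODE comparison and the small-ball limit for $g$ are standard.
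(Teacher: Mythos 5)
Your proposal is correct and follows essentially the same path as the paper's proof: define $g(r)=r^{-(n-1)}\int_{B_r\cap\Si}f$, use \eqref{gmean1} together with the hypothesis $\Delta_\Si f\ge -\lambda_1 f - h$ and the bounds $|x|\le r\le 1$ to obtain the scalar inequality $g'(r)+c_1 g(r)\ge -c_2 r^{-\alpha}-c_3$, then apply the integrating factor $e^{c_1 r}$, integrate over $(0,1]$ (equivalently from $s$ to $1$ and send $s\to 0^+$), and identify $\lim_{r\to 0^+}g(r)=\omega_{n-1}f(0)$. This matches the paper's argument step for step; you are only slightly more explicit about the small-ball limit and the integration at the possibly singular endpoint.
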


\begin{proof}
Define
\[g(t)= t^{-(n-1)}\int_{B_t\cap\Si} f\]
then using \eqref{gmean1} and \eqref{laplacian}
\[\begin{split}g'(t)&\ge -g(t) \left(\frac {\l_1}{2}t+\sup_{\Si\cap B_1(0)} |H|\right)- \frac 12t^{-n}\int_{B_t\cap\Si}(t^2-|x|^2)h\end{split}.\]
Hence, since $t\le 1$ and by the definition of $c_1, c_2, c_3$
\[g'(t)+c_1 g(t)\ge -c_3 -c_2 t^{-\a}\implies (g(t)e^{c_1 t})'\ge-{c_2} e^{c_1 }t^{-\a} -c_3 e^{c_1}.\]
After integrating from 0 to 1:
\[\o_{n-1} f(0)= \lim_{t\to 0^+}g(t)\le e^{c_1 }g(1)+\int_0^1e^{c_1}(c_2t^{-\a}+c_3)dt\le e^{c_1 }g(1)+\left(\frac{c_2}{1-\a}+c_3\right)e^{c_1}.\]
\end{proof}

In order to prove Theorem~\ref{choischoen} we still need some information about $|A|$. It is well-known that for a minimal hypersurface in $\R^n$, $|A|$ satisfies the following partial differential inequality, Simons' inequality \cite{sim1},
\begin{equation*}
\Delta|A|^2\ge -2|A|^4+2\left(1+\frac{2}{n+1}\right)|\nabla|A||^2.\end{equation*}
In \cite{echu}, Ecker and Huisken generalized Simons' inequality to obtain the following estimate for hypersurfaces in $\R^n$:
\begin{equation*}
 \Delta|A|^2\ge  2h_{ij}\nabla_i\nabla_jH-2|A|^4+2Hh_{ij}h_{ik}h_{jk} +2\left(1+\frac{2}{n+1}\right)|\nabla|A||^2-c(n)|\nabla H|^2. \end{equation*}
where the $h_{ij}$'s are the coefficients of the second fundamental form $A$. When $n=3$, $2Hh_{ij}h_{ik}h_{jk}\ge H^2|A|^2$ and thus we obtain
\begin{equation}\label{deltaA2}
\Delta|A|^2\ge 2h_{ij}\nabla_i\nabla_jH-2|A|^4-c|\nabla H|^2.
\end{equation}
Using equation~\eqref{deltaA2} and the previous results we now prove Theorem~\ref{choischoen}.\\

\noindent{\textit{Proof of Theorem~\ref{choischoen}.}}
Note first that we can assume that $\d>0$, since else the theorem is trivially true. We first prove case (i), i.e. when we assume that $\|H\|^*_{W^{2,2}(\B_{r_0})}\le\delta\e_0$. Set $F=(r_0-r)^2|A|^2$ on $\B_{r_0}$, where $r(x)=|x|$, and let $\d_0$ be the maximum value of $F$ and $x_0$ the point where this maximum is attained. Assume, for a contradiction, that $\d_0>\d$ and pick $\s$ so that
\[\s^2|A|^2(x_0)=\frac{\d}{4}.\]
Then:
\[2\s\le r_0-r(x_0)\text{  and  }\frac12\le\frac{r_0-r}{r_0-r(x_0)}\le\frac32\,\,,\,\forall x\in \B_\s(x_0)\]
and
\[(r_0-r(x_0))^2\sup_{\B_\s(x_0)}|A|^2\le 4F(x_0)\implies
\sup_{\B_\s(x_0)}|A|^2\le 4|A|^2(x_0)=\s^{-2}\d.\]

Let $\wt{\Si}=\eta_{x_0,\frac{\s}{4}}(\B_{r_0})$ (where $\eta_{x,\l}(y)=\l^{-1}(y-x) $ that is a rescaling and a translation) and let $\wt{A}$, $\wt{H}$ be the second fundamental form and the mean curvature of $\wt\Si$. Then
\begin{equation}\label{assu}\sup_{\B_{4}\subset\wt{\Si}}|\wt{A}|^2\le\frac{\s^2}{16}\sup_{\B_\s(x_0)\subset \Si}|A|^2\le\frac{\d}{16}<\frac{1}{16} \text{  and  } |\wt{A}|^2(0)=\frac{\d}{64}.\end{equation}
Note that by $\B_r$ we now denote the geodesic balls of radius $r$ in $\wt\Si$ centered at the origin.
Let $\wt\Si_0$ be the connected component  of
$B_{1}\cap \B_4$ containing the origin. Then, $\wt\Si_0$ has its boundary contained in $\partial B_1$. The proof of this is a standard argument that for completeness we have added in the Appendix, see Lemma~\ref{distlemma}. Using the Gauss equation gives
\[\sup_{\B_{4}}|K_{\wt{\Si}}|= \sup_{\B_{4}}\frac{|\wt{H}^2-|\wt{A}|^2|}{2}\leq \frac12 \left( \sup_{\B_{4}}\wt{H}^2+\sup_{\B_{4}}|\wt{A}|^2\right)\leq\frac32\sup_{\B_{4}}|\wt{A}|^2\leq \frac{3}{32}\]
 and thus, by the Bishop Volume Comparison Theorem, see for instance~\cite{chEb}, this bound implies that there exists a constant $c_b$ such that $\area{\B_{4}}<c_b$. Note that this area bound depends only on the upper bound for the absolute value of the Gaussian curvature and the radius of the intrinsic ball. Finally,
 \begin{equation}\label{areabd}
\area{\wt\Si_0}\leq \area{\B_{4}}<c_b.
\end{equation}

In what follows, we focus our analysis on $\wt\Si_0$. With an abuse of notation, we omit the tildes and set $\Si=\wt\Si_0$. Furthermore
the letter $c$ will denote an absolute constant and when different constants appear in the course of the proof we will keep the same letter
$c$ unless the constant depends on some different parameters.
We are going to show that we can apply Lemma \ref{GMVin}, with $f=|A|^2$ and $\a=\frac12$ to get an estimate for $|A(0)|$ in terms of the total curvature and $\|H\|_{W^{2,2}(\Si)}$.

The generalized Simons' inequality, with $|A|<1$ implies:
\begin{equation*}
\Delta|A|^2\ge -2|A|^2-c|\nabla H|^2 +2h_{ij}\nabla_i\nabla_jH.
\end{equation*}
Hence the inequality in the assumptions of  Lemma \ref{GMVin} is satisfied with $\lambda_1=2$ and $h=c|\nabla H|^2-2 h_{ij}\nabla_i\nabla_jH$. Furthermore we have
\[\sup_{\Si}|H|+\lambda_1/2\le \sqrt2\sup_{\Si}|A|+1\leq\sqrt2+1.\]

We are now going to find $c_2$ and $c_3$  such that
\[\frac12r^{-3}\int_{B_r\cap \Si}(r^2-|x|^2)(c|\nabla H|^2-2 h_{ij}\nabla_i\nabla_jH)\le  c_2 r^{-\frac12}+c_3,\, \text{for any } r\in(0,1]. \]
Integrating by parts we obtain
\begin{equation}\label{bad}
\begin{split}-\frac12 r^{-3}\int_{B_r\cap\Si}(r^2-|x|^2)2 h_{ij}\nabla_i\nabla_jH=
&r^{-3}\int_{B_r\cap\Si}(r^2-|x|^2)\nabla_i h_{ij}\nabla_jH\\
&+r^{-3}\int_{B_r\cap\Si}\nabla_i(r^2-|x|^2)2 h_{ij}\nabla_jH.\end{split}\end{equation}
Using Codazzi equations we can estimate the first term on the RHS of \eqref{bad} as follows:
\[\begin{split}r^{-3}\int_{B_r\cap\Si}(r^2-|x|^2)\nabla_i h_{ij}\nabla_jH\le& r^{-3}\int_{B_r\cap\Si}(r^2-|x|^2)|\nabla H|^2\end{split}.\]
We estimate the second term on the RHS of \eqref{bad}, using the fact that $\sup_\Si|A|<\sqrt{\d}/4$, as follows:
\[\begin{split}&r^{-3}\int_{B_r\cap\Si}\nabla_i(r^2-|x|^2)2 h_{ij}\nabla_jH\le r^{-3}\int_{B_r\cap\Si}2|x| |A||\nabla H|\le 2r^{-2}\int_{B_r\cap\Si}|A||\nabla H|\\
&\le \sqrt{\d}r^{-2}\int_{B_r\cap\Si}|\nabla H|\le \sqrt{\d}r^{-3/2}\area{(\Si\cap B_r(x))}^{1/2}\left(\frac 1r\int_{B_r(x)\cap\Si}|\nabla H|^2\right)^{1/2} \end{split}\]
Since $\sup_{\Si}|H|\leq\sqrt2$, the monotonicity inequality implies that there exists an absolute constant $c$ such that
\[r^{-2}\area{(\Si\cap B_r)}\le c\area{\Si}\le c c_b.\]
Thus
\begin{equation}\label{badterm}\begin{split}
\frac12r^{-3}\int_{B_r\cap \Si}&(r^2-|x|^2)(c|\nabla H|^2-2 h_{ij}\nabla_i\nabla_jH)\le\\   &c\bigg(r^{-1}\int_{B_r\cap \Si}|\nabla H|^2
+\sqrt c_b\sqrt\d r^{-\frac12}\left(r^{-1}\int_{B_r\cap \Si}|\nabla H|^2\right)^{1/2}\bigg).\end{split}\end{equation}
Using Remark \ref{c2 estimate}, with $f=|\nabla H|^2$ we have
\[
r^{-1}\int_{B_r\cap \Si}|\nabla H|^2\le 2\int_{\Si} |\nabla H|^2+\frac 12\int_{\Si}|\nabla |\nabla H|^2|
 \le  c\left(\int_{\Si} |\nabla H|^2+ \int_{\Si} |\nabla^2 H|^2\right).
\]
Therefore we have shown that for any $ r\in(0,1]$
\[
\frac12r^{-3}\int_{B_r\cap \Si}(r^2-|x|^2) (c|\nabla H|^2-2 h_{ij}\nabla_i\nabla_jH)\le
 c\left(\|H\|_{W^{2,2}(\Si)}
+\sqrt c_br^{-\frac12}\sqrt{\d}\|H\|^{\frac12}_{W^{2,2}(\Si)}\right).
 \]
 Here $\|H\|_{W^{2,2}(\Si)}$, denotes the $W^{2,2}$ norm of $H$ on $\Si$, i.e.
 \[\|H\|_{W^{2,2}(\Si)}:=\int_{\Si}|H|^2+\int_{\Si}|\nabla H|^2+\int_{\Si}|\nabla^2 H|^2.\]
Applying Lemma \ref{GMVin} we obtain
\begin{equation}\label{A0 bound}
\begin{split}
\pi|A(0)|^2&\le e^{\sqrt2 +1}\left(\int_{B_1\cap\Si} |A|^2+2c\left(\sqrt c_b\sqrt{\d}\|H\|^{\frac12}_{W^{2,2}(\Si)}+\|H\|_{W^{2,2}(\Si)}\right)\right)\\
&\le e^{\sqrt2 +1}(\d\e_0+2c\d(\sqrt c_b\sqrt{\e_0}+\e_0))\le c\d\sqrt{\e_0}
\end{split}
\end{equation}
where $c$ is an absolute constant. Thus, we can pick $\e_0$ sufficiently small, so that $|A(0)|^2<\frac{\d}{64}$, which contradicts \eqref{assu}. This finishes the proof of case (i).

Note that in equation \eqref{A0 bound} we have used the following elementary inequality to estimate $\|H\|_{W^{2,2}(\Si)}$. With an abuse of notation, let us reintroduce the tildes to denote the surfaces and quantities obtained after rescaling so that $\|H\|_{W^{2,2}(\Si)}=\|\wt{H}\|_{W^{2,2}(\wt{\Si}_0)}$ and $\wt{\Si}=\eta_{x_0,\frac{\s}{4}}(\B_{r_0})$. Then by the definition of the rescale invariant norms it follows that
\begin{equation}\label{rescaling}
\|\wt{H}\|_{W^{2,2}(\wt{\Si}_0)}\leq\|\wt{H}\|^*_{W^{2,2}(\wt{\Si})}=\|H\|^*_{W^{2,2}(\B_{r_0})}\le\d\e_0.
\end{equation}

We now prove case (ii), i.e. when we assume that $\|H\|^*_{W^{1,p}(\B_{r_0})}\le(\delta\e_0)^{p/2}$. We note that the same argument, as in case (i), carries through up to inequality \eqref{badterm}. In this case, instead of using Remark \ref{c2 estimate}, we will bound the RHS of \eqref{badterm} using the area bound \eqref{areabd}. In particular we obtain
\[\begin{split}r^{-1}\int_{B_r\cap \Si}|\nabla H|^2
&\le r^{-1} \area(\Si\cap B_r(x))^{1-2/p}\left(\int_{B_r\cap \Si}|\nabla H|^p\right)^{2/p}\\
&\le c^{1-2/p}r^{1-4/p}{\|H\|_{W^{1,p}(\Si)}}^{2/p} \le c{\|H\|_{W^{1,p}(\Si)}}^{2/p} r^{-2/p} \end{split}\]
where $c$ is an absolute constant (independent of $p$) and the last inequality is true since $r\le 1$ and $p>2$. Here $\|H\|_{W^{1,p}(\Si)}$ denotes the $W^{1,p}$ norm of $H$ in $\Si$, i.e.
\[\|H\|_{W^{1,p}(\Si)}:=\int_{\Si}|H|^p+\int_{\Si}|\nabla H|^p.\]
Using this estimate in \eqref{badterm} we get

\begin{equation*}\label{bad term}\begin{split}
\frac12r^{-3}\int_{B_r\cap \Si}(r^2-|x|^2)&(c|\nabla H|^2-2 h_{ij}\nabla_i\nabla_jH)\le\\
  &c \left({\|H\|_{W^{1,p}(\Si)}}^{2/p} r^{-2/p}
+\sqrt\d r^{-\frac12}r^{\frac12-\frac2p}{\|H\|_{W^{1,p}(\Si)}}^{1/p}\right)\le \\ &c r^{-2/p} \left({\|H\|_{W^{1,p}(\Si)}}^{2/p}
+\sqrt\d {\|H\|_{W^{1,p}(\Si)}}^{1/p}\right).\end{split}\end{equation*}

Applying Lemma \ref{GMVin} in this case with $\a=2/p$, we therefore obtain

 \begin{equation*}
\begin{split}
\pi|A(0)|^2&\le e^{\sqrt2 +1}\left(\int_{B_1\cap\Si} |A|^2+c\frac {p}{p-2}\left({\|H\|_{W^{1,p}(\Si)}}^{2/p}
+\sqrt\d {\|H\|_{W^{1,p}(\Si)}}^{1/p}\right)\right)\\
&\le e^{\sqrt2 +1}\left(\d\e_0+c\frac {p}{p-2}\d(\sqrt{\e_0}+\e_0)\right)\le c\d\sqrt{\e_0}\left(1+\frac{p}{p-2}\right)
\end{split}
\end{equation*}

Thus, we can pick $\e_0$ sufficiently small, depending on $p$, so that $|A(0)|^2<\frac{\d}{64}$, which contradicts \eqref{assu}. This finishes the proof of Theorem~\ref{choischoen}. Note that to estimate $\|H\|_{W^{1,p}(\Si)}$ we have also used the same argument as in equation~\eqref{rescaling}.
\qed

\begin{remark}\label{rmk}
The hypotheses needed to generalize the Choi-Schoen curvature estimate are optimal. For some $\a\in(0,1/2)$ and $\e\in(0,1)$, let
\[
u_\e(x,y)=xy\frac{\log^\a (x^2+y^2+\e)}{\log^\a\e}\,
\]
over the disk centered at the origin of radius $1/2$.
Let $\{\e_i\}\subset(0,1)$ be a sequence such that $\e_i\to 0$. Then the graphs of the functions $u_{\e_i}$ provide a sequence of surfaces $\Sigma_{\e_i}$ for which
\[\|A\|_{L^2(\Sigma_{\e_i})}\to 0\,,\, \|H\|_{W^{1,2}(\Sigma_{\e_i})}\to 0\]
but
\[|A_{\Si_{\e_i}}|(0)\to 1.\]
Note that this example shows that the hypotheses for \emph{Theorem~\ref{main}} are also optimal.
\end{remark}

\section{Colding-Minicozzi curvature estimate generalized }\label{CMgen}

In this section we prove Theorem~\ref{main} with the additional assumption that the $L^2$ norm of the mean curvature is small, Theorem~\ref{varofmain}. The idea of the proof is essentially the one in~\cite{cm23} except that we need to keep track of the mean curvature and use the more general results proved in Section~\ref{chs}. 

\begin{lemma}\label{lem1} Given $C$ and $p\ge 2$, there exists $\e_1=\e_1(p, C)>0$ such that the following holds. Let $\Sigma$ be a surface embedded in $\mathbb{R}^3$ containing the origin and such that $\inj_\Sigma(0)\geq 9s$. If
\[\int_{\B_{9s}}|A|^2\le C, \quad \int_{\B_{9s}\setminus\B_s}|A|^2\le\e_1 , \quad \int_{\B_{9s}}|H|^2\le \e_1\]
and either
\begin{itemize}
\item[i.] $\|H\|^*_{W^{2,2}(\B_{9s})}\le\e_1$, if $p=2$,
 or
\item[ii.] $\|H\|^*_{W^{1,p}(\B_{9s})}\le\e_1^{p/2}$, if $p>2$,
\end{itemize}
then
\[\sup_{\B_s}|A|^2\le s^{-2}.\]
\end{lemma}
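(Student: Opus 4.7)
\emph{Proof plan.}
By rescaling we may take $s=1$. I argue by contradiction: if the lemma fails then there exist $\e_1^{(n)}\to 0$ and embedded surfaces $\Si_n\ni 0$ with $\inj_{\Si_n}(0)\ge 9$ satisfying the hypotheses with $\e_1=\e_1^{(n)}$, yet $\sup_{\B_1\cap\Si_n}|A|^2>1$. The plan is to pass to a limit $\Si_\infty$---directly if the curvature stays bounded on $\B_1$, or after a blow-up rescaling if not---and derive a contradiction from the annular hypothesis together with the smallness of the mean curvature.

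Suppose first that $\sup_{\B_1}|A_{\Si_n}|^2$ stays bounded along a subsequence. Then, using $\inj_{\Si_n}(0)\ge 9$, embeddedness, and the vanishing of the mean-curvature norms, standard compactness for surfaces with bounded second fundamental form yields smooth convergence on $\B_9$ to an embedded minimal surface $\Si_\infty\subset\R^3$ with $\sup_{\B_1}|A_{\Si_\infty}|^2\ge 1$. The annular hypothesis forces $|A_{\Si_\infty}|\equiv 0$ on $\B_9\setminus\B_1$, and real-analyticity of minimal surfaces together with unique continuation then gives $|A_{\Si_\infty}|\equiv 0$ on all of $\B_9$, a contradiction.

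If instead $\sup_{\B_1}|A_{\Si_n}|^2\to\infty$ along a subsequence, a standard point-selection argument (as in \cite{cm23}) produces $y_n\in\B_1$ and $r_n\in(0,1]$ satisfying
$$r_n^2|A|^2(y_n)\ge\tfrac14,\qquad\sup_{\B_{r_n}(y_n)}|A|^2\le 4|A|^2(y_n),$$
with $R_n:=|A|(y_n)\to\infty$. Set $\wt\Si_n:=\eta_{y_n,1/R_n}(\Si_n)$. Then on $\wt\Si_n$ we have $|A_{\wt\Si_n}|(0)=1$, $\sup_{\B_{r_nR_n}(0)}|A_{\wt\Si_n}|^2\le 4$, and $\inj_{\wt\Si_n}(0)\ge 8R_n\to\infty$, while scale invariance preserves $\int|A|^2\le C$ and sends every scale-invariant mean-curvature norm of $\wt H$ to $0$. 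Embeddedness plus compactness yields a subsequence converging smoothly on compact subsets to a complete, embedded, simply connected minimal surface $\Si_\infty\subset\R^3$ with $|A_{\Si_\infty}|(0)=1$ and $\int_{\Si_\infty}|A|^2\le C<\infty$. By the Huber--Osserman classification (together with Jorge--Meeks in the embedded case), the only simply connected complete embedded minimal surface in $\R^3$ with finite total curvature is a plane; hence $|A_{\Si_\infty}|\equiv 0$, contradicting $|A_{\Si_\infty}|(0)=1$.

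The main technical obstacle is the compactness step in the blow-up case, where one must ensure that the limit is a multiplicity-one embedded surface rather than a multi-sheeted collapse. This is handled by the scale-invariant smallness of the mean-curvature norms: Theorem \ref{choischoen}, applied on small subballs of the annular region $\B_9\setminus\B_1\subset\Si_n$ (where both $\int|A|^2$ and the $H$-norms are small), yields pointwise curvature control on the annulus, which together with embeddedness of $\Si_n$ rules out sheet accumulation in the limit and justifies passage to the embedded minimal limit.
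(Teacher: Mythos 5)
Your proof is a compactness/contradiction argument and is genuinely different from the paper's, which is a direct, effective argument: the paper applies Theorem~\ref{choischoen} on the annulus $\B_{8s}\setminus\B_{2s}$ to get pointwise smallness of $|A|$, deduces that the annulus is nearly graphical over a fixed plane (via the variation of the Gauss map), constructs a closed curve $\Gamma\subset C_{3s}\cap(\B_{8s}\setminus\B_{2s})$ bounding a disk $\Delta\supset\B_{2s}$, applies Gauss--Bonnet on $\Delta$ to get $\int_{\B_{2s}}|A|^2\le\e_0$, and then applies Theorem~\ref{choischoen} once more on $\B_{2s}$.

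There is a genuine gap in your Case~2. After blowing up at $y_n\in\B_1$ at scale $R_n=|A|(y_n)\to\infty$, you assert smooth, \emph{multiplicity-one} convergence of $\wt\Si_n$ to a complete embedded simply connected minimal surface with finite total curvature, hence a plane. You correctly identify multi-sheeted collapse as the obstruction, but the resolution you propose does not apply here: you suggest using Theorem~\ref{choischoen} on subballs of the annulus $\B_9\setminus\B_1$ of the \emph{original} surface to get pointwise curvature control and thereby rule out sheet accumulation. After rescaling by $R_n$ around $y_n\in\B_1$, however, that annulus sits at geodesic distance on the order of $R_n\to\infty$ from the blow-up center, so it gives no control whatsoever in the region where the multiplicity problem must be excluded. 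Ruling out multi-sheeted (pancake/accumulation) limits of embedded minimal disks is precisely the hard core of Colding--Minicozzi's theory, and a uniform total-curvature bound does not dispose of it in one line. Indeed, your Case~2 uses only $\int|A|^2\le C$ and the vanishing mean-curvature norms, not the annular smallness at all; if it went through cleanly it would give a short compactness proof of the Colding--Minicozzi interior curvature estimate itself, which is a strong indication that the multiplicity-one step cannot be taken for granted. A secondary point: you import the classification of complete embedded simply connected minimal surfaces of finite total curvature (Osserman together with L\'opez--Ros/Jorge--Meeks), a substantial external ingredient that the paper's direct Gauss--Bonnet argument avoids entirely. Case~1 of your sketch is closer to being complete (there the annular smallness does bound the curvature on $\B_9\setminus\B_1$ uniformly, which plausibly prevents sheet accumulation in the limit, and real analyticity/unique continuation then forces the limit to be a plane), but Case~2 as written does not close.
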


\begin{proof}
In order to prove the lemma, we are going to show that if $\e_1$ is sufficiently small (depending on $p$ and $C$) then
\[ \int_{\B_{2s}}|A|^2\le\e_0\]
and either
\begin{itemize}
\item[i.] $\|H\|^*_{W^{2,2}(\B_{2s})}\le\e_0$, if $p=2$,
 or
\item[ii.] $\|H\|^*_{W^{1,p}(\B_{2s})}\le\e_0^{p/2}$, if $p>2$,
\end{itemize}
where $\e_0=\e_0(p)$ is such that the conclusion of Theorem~\ref{choischoen} holds.  Consequently, applying Theorem~\ref{choischoen} proves the lemma.

Note first that (i) or (ii) are automatically true by the hypotheses, as long as $\e_1\le\e_0$ and therefore we only need to show the estimate for $\int_{\B_{2s}}|A|^2$.

By Theorem~\ref{choischoen}, for $\e_1$ sufficiently small
\begin{equation}\label{Aonsmallann}\sup_{\B_{8s}\setminus\B_{2s}}|A|^2\le C_1^2\e_1 s^{-2}\end{equation}
where $C_1=\frac{1}{\sqrt{\e_0}}$ is fixed and $\e_0$ is as above.

Since $\inj_\Sigma(0)\geq 9s$, using Gauss-Bonnet yields
\[\length(\partial\B_{2s})-4\pi s=-\int_0^{2s}\int_{\B_\r} K_\Si,\]
and thus we have
\[\length(\partial\B_{2s})\le4\pi s+s\int_{\B_{2s}} (|A|^2+|H|^2)\le(4\pi+C)s+s\int_{\B_{2s}} |H|^2.\]
Therefore
\[\diam(\B_{8s}\setminus\B_{2s})\le 6s+\frac{4\pi+C+1}{2}s +6s\le(13+2\pi+C/2)s \]
provided that $\e_1\le 1$.
Let $x, x'\in \B_{8s}\setminus\B_{2s}$ and let $\gamma=\gamma(t)$ be a path between them parametrized by arclength so that $\gamma\subset\B_{8s}\setminus\B_{2s}$ and $t_0=\length\gamma\le\diam(\B_{8s}\setminus\B_{2s})$. Then, by letting $n(x)$ denote the normal of $\Si$ at the point $x$, we have
\[\begin{split}\dist_{S^2}(n(x'), n(x))&=\int_0^{t_0}\frac{d}{dt}\dist_{S^2}(n(\gamma(t)), n(x))
\le \int_0^{t_0}|\nabla\dist_{S^2}(n(\gamma(t)), n(x))|\\&\le\int_0^{t_0}|A(\gamma(t))|
\le C_1\e_1^{1/2}(13+2\pi+C/2)\end{split}\]
and thus
\[\sup_{x,x'\in\B_{8s}\setminus\B_{2s}}\dist_{S^2}(n(x'), n(x))\le C_1\e_1^{1/2}(13+2\pi+C/2).\]
By rotating $\R^3$ so that $n(p)=e_3$ for some $p\in\B_{8s}\setminus\B_{2s}$ we then get
\begin{equation}\label{nablax3}\sup_{\B_{8s}\setminus\B_{2s}}|\nabla x_3|\le C_1\e_1^{1/2}(13+2\pi+C/2)\end{equation}
since for $x=(x_1,x_2,x_3)$
\[|\nabla x_3|=|\proj_{T_x\Si}(Dx_3)|=|\proj_{T_x\Si} (e_3)|=|n(x)-e_3|=|n(x)-n(p)|.\]

Given $y\in \partial\B_{2s}$, let $\gamma_y$ be the outward normal geodesic from $y$ to $\partial\B_{8s}$ parametrized by arclength on $[0,6s]$. Then, \eqref{Aonsmallann} implies that for $\e_1$ small enough, $|\gamma_y(6s)-\gamma_y(0)|>(5+\frac12)s$ (see also Lemma \ref{distlemma}) which in turn implies that
 \[|\Pi(\gamma_y(6s))-\Pi(\gamma_y(0))|>5s\]
 where $\Pi$ denotes the projection onto the $(x_1,x_2)$ plane.
 This last implication follows from \eqref{nablax3}, since
  \[|x_3(\gamma_y(6s))-x_3(\gamma_y(0))|\le\int_{\gamma_y|_{[0,6s]}}|\nabla x_3|\le 6sC_1\e_1^{1/2}(13+2\pi+C/2).\]

Let us denote by $C_r$ the vertical cylinder of radius $r$, $C_r:=\{x_1^2+x_2^2=r^2\}$. The previous discussion implies that the intersection between $C_{3s}$ and the boundary of the annulus $\B_{8s}\setminus \B_{2s}$ is empty. More precisely, $\partial \B_{2s}$ is contained inside the cylinder while $\partial \B_{8s}$ is outside. From this observation it follows that $C_{3s}\cap\{ \B_{8s}\setminus \B_{2s} \}$ consists of a collection of closed curves. Since $\B_{8s}\setminus \B_{2s}$ is locally graphical over $\{x_3=0\}$ and the surface is embedded, each curve is  a graph over $\partial D_{3s}$, where $D_r$ is the disk of radius $r$ centered at the origin in the $\{x_3=0\}$ plane. For each $y\in \partial \B_{2s}$  let $\overline{t}_y$ be the minimum $t>2s$ such that $\gamma_y(\overline{t}_y)\in C_{3s}$ and let $\Gamma$ be the curve in $C_{3s}\cap\{ \B_{8s}\setminus \B_{2s} \}$ defined by
$$\Gamma := \{\gamma_y(\overline{t}_y) \mid y\in\partial \B_{2s} \}.$$ Since $\inj_\Sigma(0)\geq 9s$, such $\Gamma$ is a deformation retract of $\partial \B_{2s}$ and thus it is the boundary of a disk $\Delta$ containing $\B_{2s}$.

Using Gauss-Bonnet and Gauss equation,
$$2\pi- \int_\Gamma k_g=\int_\Delta K_\Sigma=\frac12\int_\Delta (H^2-|A|^2)$$
and thus
\begin{equation}\label{gb1}
\int_{\B_{2s}}|A|^2\leq \int_{\Delta}|A|^2 \leq \int_{\Delta}H^2+2\int_\Gamma k_g -4\pi.
\end{equation}

For  $\Gamma$ we have that
\[\sup_{\Gamma}|A|\le C_1\e_1^\frac12 s^{-1} \quad \text{and} \quad \sup_{\Gamma}|\nabla x_3|\le C_1\e_1^{1/2}(13+2\pi+C/2).\]
Thus applying a standard argument, that is Lemma~\ref{appendix} in the Appendix with $\e=C_1\e_1^{1/2}(13+2\pi+C/2)$, we obtain that

$$\length(\Gamma)\leq 6\pi s(1+2\e) \quad \text{and}\quad |k_g|< (3s)^{-1}(1+c\e).$$
where $c$ is an absolute constant, and
\begin{equation}\label{gb2}\begin{split}
\int_\Gamma k_g -2\pi&\leq \length(\Gamma)\sup_\Gamma|k_g|-2\pi\leq 2\pi(1+2\e)(1+c\e)-2\pi\\
&\leq 6\pi(1+c)\e= 6\pi(1+c)C_1\e_1^{1/2}(13+2\pi+C/2).\end{split}
\end{equation}
Using $\int_\Delta H^2\le\e_1$, together with equations \eqref{gb1} and \eqref{gb2}, if $\e_1$ is sufficiently small we obtain that
$$\int_{\B_{2s}}|A|^2\leq \e_0$$ and applying the Choi-Schoen estimate generalized finishes the proof of the lemma.
\end{proof}

Using Lemma~\ref{lem1}  the proof of the following variation of the Theorem \ref{main} is rather straightforward. Theorem~\ref{varofmain} below is in fact Theorem \ref{main} with the additional assumption that the $L^2$ norm of the mean curvature is small.

\begin{theorem}\label{varofmain}
Given $C_1$ and $p\ge 2$, there exist $C_2=C_2(p, C_1)\ge0$ and $\e_p=\e_p(C_1)>0$ such that the following holds. Let $\Sigma$ be a surface embedded in $\mathbb{R}^3$ containing the origin with $\inj_\Sigma(0)\geq s >0$,
$$\int_{\B_{s}}|A|^2\le C_1\,,\,\int_{\B_{s}}|H|^2\le\e_p$$
and either
\begin{itemize}
\item[i.]  $ \|H\|^*_{W^{2,2}(\B_{s})}\le\e_2$, if $p=2$, or
\item[ii.] $\|H\|^*_{W^{1,p}(\B_{s})}\le\e_p$, if $p>2$,
\end{itemize}
then
\[
|A|^2(0)\le C_2s^{-2}.\]
\end{theorem}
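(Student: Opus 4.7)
\emph{Proof proposal.} The plan is to deduce Theorem \ref{varofmain} directly from Lemma \ref{lem1} by a pigeonhole descent on intrinsic annuli: the global total-curvature budget $C_1$ forces at least one concentric shell on which the annular curvature hypothesis of Lemma \ref{lem1} holds, while the remaining hypotheses are inherited from those of Theorem \ref{varofmain} by monotonicity in the ball radius. After rescaling by $1/s$ I may assume $s=1$, so the goal becomes $|A|^2(0)\le C_2$. Let $\e_1=\e_1(p,C_1)$ be the constant of Lemma \ref{lem1} (with $C=C_1$), and choose $\e_p\le\e_1$ in case (i) and $\e_p\le\min\{\e_1,\e_1^{p/2}\}$ in case (ii); I will show $C_2:=9^{2\lceil C_1/\e_1\rceil+2}$ works.

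Set $N:=\lceil C_1/\e_1\rceil$ and consider the disjoint intrinsic annuli $A_k:=\B_{9^{-k}}\setminus\B_{9^{-k-1}}$ for $k=0,\dots,N$. Since
\[
\sum_{k=0}^{N}\int_{A_k}|A|^2\le\int_{\B_1}|A|^2\le C_1<(N+1)\e_1,
\]
pigeonhole produces some $k^*\in\{0,\dots,N\}$ with $\int_{A_{k^*}}|A|^2<\e_1$. Put $r:=9^{-k^*-1}$, so $A_{k^*}=\B_{9r}\setminus\B_r$ and $9r\le 1$. At this radius I verify the hypotheses of Lemma \ref{lem1}: the bound $\inj_\Sigma(0)\ge 1\ge 9r$ is immediate; the inclusion $\B_{9r}\subset\B_1$ yields $\int_{\B_{9r}}|A|^2\le C_1$ and $\int_{\B_{9r}}|H|^2\le\e_p\le\e_1$; the annular smallness $\int_{\B_{9r}\setminus\B_r}|A|^2<\e_1$ is the choice of $k^*$; and each summand $t^j\int_{\B_t}|\nabla^i H|^q$ appearing in the scale-invariant Sobolev norms is non-decreasing in $t\in(0,1]$, so
\[
\|H\|^*_{W^{2,2}(\B_{9r})}\le\|H\|^*_{W^{2,2}(\B_1)}\le\e_2\le\e_1
\]
in case (i) and analogously $\|H\|^*_{W^{1,p}(\B_{9r})}\le\|H\|^*_{W^{1,p}(\B_1)}\le\e_p\le\e_1^{p/2}$ in case (ii). Lemma \ref{lem1} then yields $\sup_{\B_r}|A|^2\le r^{-2}$, and in particular $|A|^2(0)\le r^{-2}\le 9^{2N+2}=C_2$; undoing the initial rescaling gives the statement for arbitrary $s$.

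Given Lemma \ref{lem1}, there is essentially no hard step: the argument is a finite pigeonhole on dyadic intrinsic annuli combined with the monotonicity in $t\in(0,1]$ of the scale-invariant norms $\|H\|^*_{W^{2,2}(\B_t)}$ and $\|H\|^*_{W^{1,p}(\B_t)}$, which ensures that the smallness of $H$ in the various norms is inherited on the smaller concentric ball on which Lemma \ref{lem1} is invoked. All the substantive analytic content is absorbed into Lemma \ref{lem1}, which in turn uses a Gauss--Bonnet argument together with the generalized Choi--Schoen estimate of Theorem \ref{choischoen}.
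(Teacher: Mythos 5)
Your argument is correct and follows essentially the same route as the paper: a finite pigeonhole over concentric intrinsic annuli of ratio $9$ to locate a shell with small annular total curvature, combined with the monotonicity of the scale-invariant norms in the radius, so that Lemma~\ref{lem1} applies on the smaller ball. The only cosmetic differences are the initial normalization $s=1$ and the use of $N+1$ rather than $N$ annuli, which just yields the slightly larger constant $C_2=9^{2N+2}$ in place of the paper's $9^{2N}$.
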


Note that in case (i) of Theorem~\ref{varofmain} the assumption $\int_{\B_s}|H|^2\leq \e_2$ becomes redundant. Thus with Theorem~\ref{varofmain} we have proved case (i) of Theorem~\ref{main}. In order to prove case (ii) of Theorem~\ref{main}, we need to remove the extra assumption on the $L^2$ norm of $H$. This will be done in the next section.\\

\noindent{\textit{Proof of Theorem~\ref{varofmain}.}}
Let $\e_1=\e_1(p,C_1)$ be such that the conclusion of Lemma~\ref{lem1} holds with $C=C_1$ and let $N$ be the least integer greater than $\frac{C_1}{\e_1}$. Without loss of generality, let us assume $\e_1<1$. We are going to show that if $\e_p<\e_1^{p\slash2}\le\e_1$, then $C_2$ can be taken to be $9^{2N}$.

There exists $1\leq j\leq N$ such that
\[
\int_{\B_{9^{1-j}s}\backslash \B_{9^{-j}s}}|A|^2\le \frac{C_1}{N}\leq \e_1.
\]
Moreover
\[\int_{\B_{9^{1-j}s}}|H|^2\le \int_{\B_{s}}|H|^2\le\e_p< \e_1\]
and, in case $p=2$,
\[
\|H\|^*_{W^{2,2}(\B_{9^{1-j}s})}\leq\|H\|^*_{W^{2,2}(\B_{s})}\le\e_2<\e_1
\]
or, in case $p>2$,
\[
\|H\|^*_{W^{1,p}(\B_{9^{1-j}s})}\leq\|H\|^*_{W^{1,p}(\B_{s})}\le\e_p< \e_1^{p\slash2}.
\]
Thus, applying Lemma~\ref{lem1} to $\B_{9\frac{s}{9^{j}}}$, we obtain
\[
|A|^2(0)\leq \left(\frac{s}{9^{j}}\right)^{-2}\leq 9^{2N}s^{-2}.
\]\qed

\section{Total Curvature and Area}\label{elp}

In this section we prove case (ii) of Theorem~\ref{main} by removing the extra assumption on the $L^2$ norm of $H$ in Theorem \ref{varofmain}.  In order to do that we show that when the total curvature is bounded, then the rescale invariant $L^p$ norm of the mean curvature, $p>2$,  bounds the $L^2$ norm of the mean curvature. In addition, we then show that when the rescale invariant $L^p$ norm of the mean curvature is bounded, $p\geq2$, a bound on the total curvature of an intrinsic ball provides a bound for its area, and viceversa (see for instance~\cite{cm22} for this being done in the minimal case).
This relation  enables us to replace the bound on the total curvature in Theorem~\ref{main} with an area bound, Corollary~\ref{cor}.



An easy computation using Gauss-Bonnet Theorem (see for instance~\cite{cm22}) gives the following lemma.

\begin{lemma}
If $\B_s\subset \Sigma$ is such that  $\inj_\Sigma(0)\geq s>0$, then 
\[
\area\B_s-\pi s^2=-\int_0^s\int_0^t\int_{\B_\rho}K_\Sigma.
\]
\end{lemma}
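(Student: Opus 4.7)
The plan is to use geodesic polar coordinates centered at the origin together with the Gauss--Bonnet theorem, integrating twice. Since $\inj_\Sigma(0)\ge s$, the exponential map at $0$ is a diffeomorphism onto $\B_s$, so every intermediate ball $\B_\rho$ with $0<\rho\le s$ is a topological disk and the metric on $\B_s\setminus\{0\}$ can be written in polar form $dr^2+f(r,\theta)^2\,d\theta^2$ with $f(0,\theta)=0$ and $\partial_r f(0,\theta)=1$.

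In these coordinates the length and area take the explicit form
\[
\length(\partial\B_\rho)=\int_0^{2\pi}f(\rho,\theta)\,d\theta,\qquad \area(\B_\rho)=\int_0^\rho\int_0^{2\pi}f(r,\theta)\,d\theta\,dr,
\]
and the geodesic curvature of $\partial\B_\rho$ satisfies $k_g=\partial_r f/f$, so that $\int_{\partial\B_\rho}k_g=\frac{d}{d\rho}\length(\partial\B_\rho)$. Combined with Gauss--Bonnet applied to the topological disk $\B_\rho$, namely
\[
\int_{\B_\rho}K_\Sigma+\int_{\partial\B_\rho}k_g=2\pi,
\]
this yields
\[
\frac{d}{d\rho}\length(\partial\B_\rho)=2\pi-\int_{\B_\rho}K_\Sigma.
\]
Integrating from $0$ to $t$, using $\length(\partial\B_0)=0$, gives
\[
\length(\partial\B_t)=2\pi t-\int_0^t\int_{\B_\rho}K_\Sigma.
\]

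Finally, the coarea formula (or direct differentiation of the polar expression for $\area(\B_t)$) shows $\frac{d}{dt}\area(\B_t)=\length(\partial\B_t)$; integrating once more from $0$ to $s$ with $\area(\B_0)=0$ produces
\[
\area(\B_s)=\int_0^s\length(\partial\B_t)\,dt=\pi s^2-\int_0^s\int_0^t\int_{\B_\rho}K_\Sigma,
\]
which rearranges to the stated identity. There is no real obstacle here: the only point one has to verify is the validity of the polar-coordinate formulas and of Gauss--Bonnet, which is precisely what the injectivity radius hypothesis $\inj_\Sigma(0)\ge s$ guarantees for all $\rho\le s$.
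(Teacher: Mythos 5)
Your proof is correct and follows essentially the same route as the paper, which leaves the computation implicit and simply cites Colding--Minicozzi: apply Gauss--Bonnet to $\B_\rho$, use the first-variation identity $\int_{\partial\B_\rho}k_g=\frac{d}{d\rho}\length(\partial\B_\rho)$ to obtain $\length(\partial\B_t)=2\pi t-\int_0^t\int_{\B_\rho}K_\Sigma$, then integrate once more via the coarea formula. You have merely made explicit, through geodesic polar coordinates, the standard facts the paper takes for granted.
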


Since $-2K_\Sigma=|A|^2-|H|^2$, the above lemma implies

\begin{equation}\label{area}
\area\B_s\leq \pi s^2 + \frac12s^2\int_{\B_s}|A|^2+\frac12s^2\int_{\B_s}|H|^2.
\end{equation}
Furthermore
\[
-\int_{\B_s} K_\Sigma \frac{(s-r)^2}{2}=-\int_0^s\int_0^t\int_{\B_\rho}K_\Sigma=\area{\B_s}-\pi s^2
\]
where the first equality follows by the coarea formula and integration by parts twice (cf.  Corollary 1.7 in~\cite{cm22})
and thus

\begin{equation}\label{AaH}
\begin{split}
\frac{s^2}{16}\int_{\B_{\frac{s}{2}}}|A|^2&\leq \int_{\B_s} \frac{|A|^2}{2} \frac{(s-r)^2}{2}= \area{\B_s}-\pi s^2+\int_{\B_s} \frac{|H|^2}{2} \frac{(s-r)^2}{2}\\
&=\area{\B_s}-\pi s^2+\int_0^s\int_0^t\int_{\B_\rho}\frac{|H|^2}{2}
\leq \area{\B_s}-\pi s^2
+\frac{s^2}{2}\int_{\B_s}|H|^2.
\end{split}
\end{equation}
Here for any $x\in\B_s$, $r=r(x)$ denotes the geodesic distance from the origin.
Hence \eqref{area} and \eqref{AaH} show that when $\int_{\B_s}H^2$ is bounded, a bound on the total curvature provides a bound on the area and viceversa.

In the following lemma we show that a bound on  $s^{p-2}\int_{\B_s}|H|^p$, $p>2$, and either  a bound on the total curvature or a bound on the area, provide a bound on the $L^2$ norm of $H$. Furthermore when the bound of $s^{p-2}\int_{\B_s}|H|^p$ is small, the  $L^2$ norm of $H$ is also small.
\begin{lemma}\label{H2} If $\B_s\subset \Sigma$ is such that  $\inj_\Sigma(0)\geq s>0$ and  $\left(s^{p-2}\int_{\B_s}|H|^p\right)^{1/p}=\e$. Then for $q$ such that $1/q+2/p=1$
\begin{equation}\label{area2}
\int_{\B_s}|H|^2\le \e^2 (s^{-2}\area{\B_s})^{1/q}
\end{equation}
 and 
 \begin{equation}\label{fp}
\int_{\B_s}|H|^2\le 2^{p/2} (\e^2+\e^p)\left(\pi+\frac12\int_{\B_s}|A|^2\right)^{1/q}.
\end{equation}
\end{lemma}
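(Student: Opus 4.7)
The plan is to derive \eqref{area2} as a direct application of H\"older's inequality and to obtain \eqref{fp} by feeding the elementary area bound \eqref{area} back into \eqref{area2}. The only subtlety is that this substitution produces an implicit inequality for $\int_{\B_s}|H|^2$, which must then be decoupled.

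For \eqref{area2}, write $|H|^2=|H|^2\cdot 1$ and apply H\"older's inequality with the conjugate exponents $p/2$ and $q$ (the relation $2/p+1/q=1$ is exactly the condition that these be conjugate):
\[\int_{\B_s}|H|^2\le \Bigl(\int_{\B_s}|H|^p\Bigr)^{2/p}(\area{\B_s})^{1/q}.\]
By hypothesis $\int_{\B_s}|H|^p=\e^p s^{2-p}$, and a quick computation using $2(p-2)/p=2/q$ shows that the right-hand side equals $\e^2 s^{-2/q}(\area{\B_s})^{1/q}=\e^2(s^{-2}\area{\B_s})^{1/q}$, which is \eqref{area2}.

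For \eqref{fp}, I would substitute \eqref{area} into \eqref{area2}. Setting $I:=\int_{\B_s}|H|^2$ and $J:=\pi+\tfrac12\int_{\B_s}|A|^2$, the estimate \eqref{area} reads $s^{-2}\area{\B_s}\le J+I/2$, so that \eqref{area2} becomes
\[I\le \e^2(J+I/2)^{1/q}.\]
To decouple, split into two cases. If $I/2\le J$, then $J+I/2\le 2J$ and hence $I\le 2^{1/q}\e^2 J^{1/q}\le 2\e^2 J^{1/q}$. If instead $I/2>J$, then $J+I/2\le I$, so $I\le \e^2 I^{1/q}$; since $1-1/q=2/p$, this self-improves to $I\le \e^p$. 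Combining both cases and using $J^{1/q}\ge \pi^{1/q}\ge 1$ (so that $\e^p\le \e^p J^{1/q}$) together with $2\le 2^{p/2}$ for $p\ge 2$, one obtains \eqref{fp}.

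The main (mild) point to watch is the self-improvement in the second case: the implicit estimate $I\le \e^2(J+I/2)^{1/q}$ would be useless if $1/q$ equaled $1$, but since $p>2$ we have $1/q<1$, and so the apparently circular bound $I\le \e^2 I^{1/q}$ collapses to the genuine estimate $I\le \e^p$. This is the only non-routine ingredient.
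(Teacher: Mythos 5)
Your proof is correct, and the overall strategy---H\"older for \eqref{area2}, then feed \eqref{area} into \eqref{area2} and decouple the resulting implicit inequality by a case split---is the same as the paper's. The one place where you genuinely diverge is the decoupling step. The paper first applies the subadditivity $(J+I/2)^{1/q}\le J^{1/q}+(I/2)^{1/q}$ (Minkowski), obtaining $I\le \e^2 J^{1/q}+\e^2(I/2)^{1/q}$, and then case-splits on whether the term $\e^2(I/2)^{1/q}$ can be absorbed into $I/2$; in the ``bad'' case it gets $I^{1-1/q}\le 2\e^2$, hence $I\le 2^{p/2}\e^p$. You instead case-split directly on whether $I/2\le J$, replacing $J+I/2$ by $2J$ or by $I$ respectively, and in the ``bad'' case obtain the slightly sharper $I\le\e^p$ without the factor $2^{p/2}$. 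Both versions need $J^{1/q}\ge1$ (which holds since $J\ge\pi$) and $2\le 2^{p/2}$ to merge the two cases into the stated constant $2^{p/2}(\e^2+\e^p)$, and both rely on $1-1/q=2/p>0$ to make the self-referential bound collapse---you flag this explicitly, which is the right thing to emphasize. Your variant is marginally cleaner in that it avoids invoking the subadditivity of $x\mapsto x^{1/q}$, at no cost in the final constant.
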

 
 \begin{remark}\label{rmk2}Equation \eqref{fp} shows that if $\int_{\B_s}|A|^2$ is bounded, then if the $L^p$ norm of $H$ is small, $p>2$, so is the $L^2$ norm of $H$. This estimate, together with Theorem~\ref{varofmain} proves Theorem~\ref{main}.
\end{remark}

\noindent\textit{Proof of Lemma \ref{H2}.}
The proof of inequality \eqref{area2} is just an application of the Holder inequality. Using \eqref{area} and the Minkowski inequality to estimate the RHS of \eqref{area2} we get

\[\begin{split}\int_{\B_s}|H|^2\le&\e^2\left(\pi+\frac12\int_{\B_s}|A|^2+\frac12\int_{\B_s}|H|^2\right)^{1/q}\le
\e^2\left(\pi+\frac12\int_{\B_s}|A|^2\right)^{1/q}+\e^2\left(\frac12\int_{\B_s}|H|^2\right)^{1/q}.\end{split}\]
If
\begin{equation}\label{cond}{\e^2}\left(\frac12\int_{\B_s}|H|^2\right)^{1/q}\le \frac12\int_{\B_s}|H|^2\end{equation}
then the previous inequality gives
\[\int_{\B_s}|H|^2\le 2 \e^2\left(\pi+\frac12\int_{\B_s}|A|^2\right)^{1/q}.\]
On the other hand, if \eqref{cond} does not hold then
\[{\e^2}\left(\frac12\int_{\B_s}|H|^2\right)^{1/q}> \frac12\int_{\B_s}|H|^2\implies\left(\int_{\B_s}|H|^2\right)^{1-1/q}\le2\e^2\implies\]
\[\int_{\B_s}|H|^2\le 2^{p/2}\e^p.\]
Therefore in either case \eqref{fp} is true.
\qed

Using \eqref{fp} and \eqref{area2} of Lemma \ref{H2} to estimate the RHS of \eqref{area} and \eqref{AaH} respectively we have the following two estimates that show that when the rescale invariant $L^p$ norm of the mean curvature is bounded, $p>2$, a bound on the total curvature of an intrinsic ball provides a bound for its area, and viceversa

\begin{equation}\label{area3}
\area\B_s\leq \left(1+\e^22^{\frac p2-1}\right)\left(\pi s^2 + \frac12s^2\int_{\B_s}|A|^2\right)
\end{equation}
and
\begin{equation}\label{forcor}\frac{s^2}{16}\int_{\B_{\frac{s}{2}}}|A|^2\leq \area{\B_s}-\pi s^2+\frac{\e^2}{2}s^\frac{4}{p}\area\B_s^{1/q}
\end{equation}
Finally, \eqref{forcor}  implies that Theorem~\ref{main} still holds if instead of a bound on the total curvature, we assume a bound on the area and thus we have the following corollary of Theorem~\ref{main}.

\begin{corollary}\label{cor}
Given $C_1$ and $p\ge 2$, there exist $C_2=C_2(p, C_1)\ge0$ and $\e_p=\e_p(C_1)>0$ such that the following holds. Let $\Sigma$ be a surface embedded in $\mathbb{R}^3$ containing the origin with $\inj_\Sigma(0)\geq s >0$,
$$\min \{ s^{-2}\area(\B_{s}), \int_{\B_{s}}|A|^2\}\le C_1$$
and either
\begin{itemize}
\item[i.]  $ \|H\|^*_{W^{2,2}(\B_{r_0})}\le\e_2$, if $p=2$, or
\item[ii.] $\|H\|^*_{W^{1,p}(\B_{r_0})}\le\e_p$, if $p>2$,
\end{itemize}
then
\[
|A|^2(0)\le C_2s^{-2}.\]
\end{corollary}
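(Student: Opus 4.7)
The plan is to reduce Corollary \ref{cor} to Theorem \ref{main}. Specifically, I would show that the area bound on $\B_s$, together with the smallness of the scale-invariant norm of $H$, forces a bound on the total curvature $\int_{\B_{s/2}}|A|^2$; Theorem \ref{main} applied on $\B_{s/2}$ then finishes the proof. The reduction is clean because $\inj_\Sigma(0)\ge s$ implies $\inj_\Sigma(0)\ge s/2$, and since $p\ge 2$ the scale-invariant norms $\|H\|^*_{W^{1,p}}$ and $\|H\|^*_{W^{2,2}}$ are monotone in the ball, so the smallness hypothesis on $\B_s$ passes to $\B_{s/2}$.

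If $\int_{\B_s}|A|^2 \le C_1$, Theorem \ref{main} applies directly, so assume instead that $s^{-2}\area(\B_s)\le C_1$. For $p>2$, inequality \eqref{forcor} gives
\[
\int_{\B_{s/2}}|A|^2 \le 16\bigl(s^{-2}\area(\B_s) - \pi\bigr) + 8\,\e^2\, s^{4/p-2}(\area(\B_s))^{1/q},
\]
where $\e=\bigl(s^{p-2}\int_{\B_s}|H|^p\bigr)^{1/p}\le\e_p^{1/p}$ and $1/q+2/p=1$. The key observation is the scaling identity $4/p+2/q=2$, which rewrites $s^{4/p-2}(\area(\B_s))^{1/q} = (s^{-2}\area(\B_s))^{1/q}\le C_1^{1/q}$. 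Taking $\e_p\le 1$ yields the total curvature bound
\[
\int_{\B_{s/2}}|A|^2 \le 16 C_1 + 8 C_1^{1/q} =: C_1'(p,C_1).
\]
For $p=2$ the same conclusion follows more directly from \eqref{AaH}, since the hypothesis forces $\int_{\B_s}|H|^2 \le \|H\|^*_{W^{2,2}(\B_s)}\le \e_2\le 1$; one obtains $\int_{\B_{s/2}}|A|^2 \le 16 C_1 + 8 =: C_1'$.

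It remains to fix constants. Let $\e_p^\star(C_1')$ and $C_2'(p,C_1')$ denote the values from Theorem \ref{main} corresponding to the total curvature bound $C_1'$, and take the $\e_p$ of Corollary \ref{cor} to be $\min\{1,\e_p^\star(C_1')\}$. Then Theorem \ref{main} applied to $\B_{s/2}$ yields $|A|^2(0)\le C_2'(p,C_1')(s/2)^{-2} = 4 C_2'(p,C_1')s^{-2}$, so the conclusion holds with $C_2 := 4C_2'(p,C_1')$. Once one has \eqref{forcor}, the argument is essentially bookkeeping of constants; the only conceptual subtlety — and the step I expect to be most delicate to get right — is the scaling identity $4/p+2/q=2$, which is precisely what ensures that the $\e^2$-term in \eqref{forcor} produces a scale-invariant bound depending only on $C_1$ and $p$ (and not on $s$).
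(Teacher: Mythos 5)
Your proof is correct and follows the paper's route: the paper deduces Corollary~\ref{cor} from Theorem~\ref{main} via inequality \eqref{forcor}, which converts the area bound (together with $H$-smallness) into a total curvature bound on $\B_{s/2}$. Your write-up fills in the constants that the paper's one-line remark leaves implicit, in particular the scaling identity $4/p+2/q=2$ and the need to apply Theorem~\ref{main} on the half-radius ball.
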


\section{Appendix}

In this Appendix we review some standard geometric facts about surfaces with bounded second fundamental form that are needed in the paper.

\begin{lemma}\label{distlemma}
Let $\Si$ be a surface in $\R^3$, $p,q\in\Si$ and let $\gamma:[0,\l]\to\Si$ be a geodesic, parametrized by arclength, such that $\gamma(0)=p$ and $\gamma(\l)=q$. If for some $\a\geq0$,
\[\sup_{t\in[0,\l]}|A(\gamma(t))|\leq\frac\a\l\]
then $|q-p|\geq\l(1-\a)$.

\end{lemma}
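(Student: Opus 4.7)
The plan is to compare the geodesic $\gamma$ with the straight line $t\mapsto p+t\gamma'(0)$ and use the fact that the curvature hypothesis forces the ambient acceleration of $\gamma$ to be small. First I would record the standard identity that for a geodesic parametrized by arclength the ambient acceleration is purely normal to $\Sigma$: since $\nabla^{\Sigma}_{\gamma'}\gamma'\equiv 0$ and $|\gamma'|\equiv 1$, one has $\gamma''(t)=A(\gamma'(t),\gamma'(t))\,\nu(\gamma(t))$ for a unit normal $\nu$ of $\Sigma$, so
\[
|\gamma''(t)|\le |A(\gamma(t))|\le\frac{\alpha}{\lambda}\qquad\text{for all } t\in[0,\lambda].
\]

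Next, I would integrate $\gamma'$ twice to obtain the ``Taylor with integral remainder'' identity
\[
q-p=\int_0^\lambda\gamma'(s)\,ds=\lambda\,\gamma'(0)+\int_0^\lambda\!\!\int_0^s\gamma''(u)\,du\,ds,
\]
and estimate the double-integral remainder in norm by $\int_0^\lambda\!\int_0^s(\alpha/\lambda)\,du\,ds=\alpha\lambda/2$. Taking the inner product of the above equation with the unit vector $\gamma'(0)$ gives
\[
(q-p)\cdot\gamma'(0)\ge \lambda-\frac{\alpha\lambda}{2}\ge \lambda(1-\alpha),
\]
and hence $|q-p|\ge(q-p)\cdot\gamma'(0)\ge\lambda(1-\alpha)$. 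If $\alpha\ge 1$ the stated conclusion is trivial, so we may assume $\alpha\in[0,1)$.

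There is no genuine obstacle in this argument: the only geometric input is that a geodesic has vanishing geodesic curvature, so its normal curvature coincides with its ambient acceleration and is pointwise bounded by $|A|$; the rest is a straight ``comparison with a line'' estimate. (The same computation in fact produces the slightly sharper factor $1-\alpha/2$, but $1-\alpha$ is all that is needed for the applications of the lemma in the main body of the paper.)
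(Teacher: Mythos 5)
Your proof is correct and takes essentially the same route as the paper: both arguments bound the ambient acceleration $|\gamma''|$ of the geodesic by $|A|\le\alpha/\lambda$ and then estimate $|q-p|$ from below by projecting onto $\gamma'(0)$. You organize the two integrations as a Taylor expansion with integral remainder (which gives the slightly sharper factor $1-\alpha/2$), while the paper first derives the pointwise bound $\langle\gamma'(t),\gamma'(0)\rangle\ge 1-\alpha$ and then integrates; the content is the same.
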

\begin{proof}
Let $k$ denote the curvature of  $\gamma$ in $\R^3$. Then, since $\gamma$ is a geodesic, for any $t\in[0,\l]$
\[|k(t)|\le |A(\gamma(t))|\leq \frac {\a}{\l}.\]
Since
\[\left|\frac{d}{dt}\langle \gamma'(t),\gamma'(0)\rangle \right|\le |k|\]
we have for all $t_0\in[0,\l]$
\[\langle \gamma'(t_0),\gamma'(0)\rangle-1=\int_0^{t_0}\frac{d}{dt}\langle \gamma'(t),\gamma'(0)\rangle \implies\]
\[\langle \gamma'(t_0),\gamma'(0)\rangle \ge 1-\int_0^{t_0}\left|\frac{d}{dt}\langle \gamma'(t),\gamma'(0)\rangle \right|\ge 1-\int_0^{t_0} |k(\gamma(t))|\ge 1- \a\frac {t_0}{\l}\ge (1- \a).\]
Also
\[\langle \gamma(\l), \gamma'(0)\rangle -\langle \gamma(0), \gamma'(0)\rangle =\int_0^{\l}\frac{d}{dt}\langle \gamma(t), \gamma'(0)\rangle \implies\]
\[\langle q-p, \gamma'(0)\rangle=\langle \gamma(\l)-\gamma(0), \gamma'(0)\rangle \ge \l(1- \a).\]
This implies that $|q-p|\geq\l(1-\a)$.
%
\end{proof}

\begin{lemma}\label{appendix}
There exist $c>0$ and  $\overline{\e}>0$ such that the following holds. Let $\Sigma$ be a surface in $\R^3$ that is graphical over some domain in the $\{x_3=0\}$  plane containing $\partial D_s$, for some $s>0$, and let $\Gamma$ be $\Sigma \cap C_s$. If for a certain $0\leq \e \leq \overline{\e}$,
\begin{equation}\label{app}
\sup_{\Gamma}|A|\le \e s^{-1} \quad \text{and} \quad \sup_{\Gamma}|\nabla x_3|\le \e
\end{equation}
then $$\length\Gamma\leq 2\pi s(1+2\e) \quad \text{and}\quad |k_g|< s^{-1}(1+c\e).$$
\end{lemma}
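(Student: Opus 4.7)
\smallskip\noindent\textbf{Proof plan for Lemma \ref{appendix}.} The strategy is to reduce everything to a computation on an explicit parametrization of $\Gamma$, exploiting the fact that $\Sigma$ is a graph in a neighborhood of $\partial D_s$.

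First I would write $\Sigma = \{x_3 = u(x_1,x_2)\}$ over a domain containing $\partial D_s$, so that $\Gamma$ admits the $2\pi$-periodic parametrization $\gamma(\theta)=(s\cos\theta,\,s\sin\theta,\,z(\theta))$ with $z(\theta):=u(s\cos\theta,s\sin\theta)$. The next step is to convert both hypotheses in \eqref{app} into pointwise bounds on $u$ along $\partial D_s$. The identity $|\nabla_\Sigma x_3|^2 = |Du|^2/(1+|Du|^2)$ gives $|Du|\le \e/\sqrt{1-\e^2}$. The graph formula $h_{ij}=u_{ij}/\sqrt{1+|Du|^2}$, relating the coefficients of the second fundamental form to the Hessian of $u$, together with $|A|\le\e s^{-1}$ and the gradient bound just obtained, yields $|D^2u|\le c\e s^{-1}$ for some absolute $c$, provided $\e\le\ov\e$ is small. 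Via the chain rule these become $|z'|\le c s\e$ and $|z''|\le c s\e$.

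Given these bounds, the length estimate is immediate:
\[\length\Gamma = \int_0^{2\pi}\sqrt{s^2 + (z'(\theta))^2}\,d\theta \le 2\pi s\sqrt{1+\tfrac{\e^2}{1-\e^2}} \le 2\pi s(1+2\e)\]
for $\e$ small, using $\sqrt{1+t}\le 1+t$ for $t\ge 0$.

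For the geodesic curvature I would use the orthogonal decomposition of the curvature vector of $\gamma$ in $\R^3$ as $\vec k = k_g\nu_\Sigma + k_n^\Sigma n_\Sigma$, which implies $k_g^2 = k_{\R^3}^2 - (k_n^\Sigma)^2 \le k_{\R^3}^2$. The space-curve curvature $k_{\R^3}=|\gamma'\times\gamma''|/|\gamma'|^3$ can be computed explicitly from $\gamma$; a short computation gives
\[k_{\R^3}^2 = \frac{s^2(s^2 + (z')^2 + (z'')^2)}{(s^2+(z')^2)^3}.\]
Substituting the bounds on $z'$ and $z''$ yields $k_{\R^3}^2 \le (1+c\e^2)s^{-2}$, hence $|k_g|\le k_{\R^3} \le s^{-1}(1+c\e)$ for $\e\le\ov\e$ sufficiently small.

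The main technical point is deriving the Hessian estimate $|D^2u|\le c\e s^{-1}$ from $|A|\le \e s^{-1}$ with an absolute constant, via the graph relation between $h_{ij}$ and $u_{ij}$ together with the inverse metric $g^{ij}=\delta^{ij}-u_iu_j/(1+|Du|^2)$; everything else is an elementary calculus computation, and the role of the smallness $\e\le\ov\e$ is merely to keep the various factors $(1+|Du|^2)^{\alpha}$ absolutely bounded.
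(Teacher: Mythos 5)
Your proof is correct, and it takes a genuinely different route on the key step --- the geodesic curvature bound. The paper never passes through the graph function $u$ or its Hessian; instead it works directly with the curvature vector $\vec k$ of the parametrized curve $r(t)=(s\cos(t/s),s\sin(t/s),x_3(t))$: it bounds the normal component $|n\cdot\vec k|$ by $|A|$, writes out $k_3$ and $(k_1,k_2,0)$ explicitly, and solves a little inequality to extract $|\ddot x_3|\le 16\e s^{-1}$, from which $|k_g|\le|\vec k|$ is controlled. Your argument instead converts $|A|\le\e s^{-1}$ into a Hessian bound $|D^2u|\le c\e s^{-1}$ via $h_{ij}=u_{ij}/\sqrt{1+|Du|^2}$ and $g^{ij}=\delta^{ij}-u_iu_j/(1+|Du|^2)$, pushes that through the chain rule to $|z''|\le cs\e$, and then uses the explicit space-curve curvature formula together with $k_g^2 = k_{\R^3}^2-(k_n^\Sigma)^2\le k_{\R^3}^2$. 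Both work and give the same absolute constants in spirit; your version is arguably more transparent, since the bound on $z''$ follows in one shot from the Hessian estimate rather than from the paper's algebraic manipulation of the components of $\vec k$, at the modest cost of having to invoke the graph formula for $h_{ij}$ and check that the metric factors are uniformly close to $1$. The length estimates are essentially identical. One small thing worth making explicit if you write this up: the passage from $|A|\le\e s^{-1}$ to $|D^2u|\le c\e s^{-1}$ uses $|A|^2=g^{ik}g^{jl}h_{ij}h_{kl}\ge(1+|Du|^2)^{-2}\sum h_{ij}^2$, so the constant $c$ depends on an absolute lower bound for $(1+|Du|^2)^{-1}$, which is where you use $\e\le\ov\e$.
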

\begin{proof}
Let us consider the following parameterization for $\Gamma$,
\[\Gamma=r(t):= \{(s\cos( t/s), s\sin(t/s), x_3(t))\mid t\in[0,2\pi s)\}\]
Because of the second estimate in \eqref{app}
\[\frac{|\dot r\cdot e_3|}{|\dot r|}\le \proj_{T_{r}M}(e_3)\le\e\]
 and since $\dot r(t)=(-\sin(t/s),\cos(t/s),\dot x_3(t))$ we get
\begin{equation}\label{dot}\frac{|\dot x_3|}{\sqrt{1+|\dot x_3|^2}}\le\e\implies |\dot x_3|^2\le\frac{\e^2}{1-\e^2}\implies |\dot x_3|\le 2\e\end{equation}
for $\e<\sqrt3 /2$.
Thus for  $L=\length \Gamma$ we have the following:
\[2\pi s\leq L:=\int_0^{2\pi s}\sqrt{1+|\dot{x}_3|^2}\le 2\pi s(1+2\e).\]
Moreover
\[\ddot r(t)=(-s^{-1}\cos(t/s),-s^{-1}\sin(t/s),\ddot x_3(t))\]
and the curvature vector $k=(k_1,k_2,k_3)$ is given by the formula
\[k=\ddot r\frac{1}{1+\dot x_3^2}+\dot r\frac {\dot x_3\ddot x_3}{(1+\dot x_3^2)^2}.\]

Let $n=(n_1,n_2,n_3)$ be the unit normal of $\Si$, then by the second estimate in \eqref{app} we have that
\[\max\{|n_1|,|n_2|\}\le\e\,\,,\,\,n_3>1-\e.\] Then
\begin{equation}\label{nk}(1-\e)|k_3| \le |n_3  k_3| \le |n\cdot k|+|(n_1,n_2,0)\cdot  k|\le|A(r)|+\e|(k_1,k_2,0)|.\end{equation}
Note that
\begin{equation}\label{k3}k_3=\frac{\ddot x_3}{1+\dot x_3^2}+\frac{\ddot x_3\dot x_3^2}{(1+\dot x_3^2)^2}\implies |k_3|\ge\frac 12|\ddot x_3|\end{equation}
because of (\ref{dot}) and for $\e$ sufficiently small ($(1+\e^2)^{-2}\ge 1/2$).

Let $\a=\frac{1}{1+\dot x_3^2(t)}$    and    $\beta=\frac {\dot x_3(t)\ddot x_3(t)}{(1+\dot x_3^2(t))^2}$

\begin{equation}\label{k1k2}(k_1, k_2,0)=(-\frac{\a}{s}\cos(t/s)-\beta\sin(t/s),-\frac{\a}{s}\sin(t/s)+\beta\cos(t/s),0)\end{equation}
and
\[|(k_1, k_2,0)|\le\left(\frac{\a^2}{ s^2}+\beta^2\right)^{1/2}\le\left(s^{-2}+4\e^2\ddot x_3^2\right)^{1/2}\le s^{-1}+2\e|\ddot x_3|.\]
Therefore using equations \eqref{app}, \eqref{nk}, \eqref{k3} and \eqref{k1k2} we have that
\[\frac{1-\e}{2}|\ddot x_3|\le2\e s^{-1}+2\e^2|\ddot x_3|\implies |\ddot x_3|\le 16\e s^{-1}\]
for $\e$ sufficiently small ($\e<1/4$).

Therefore there exists an absolute constant $c$ such that for the geodesic curvature we have
\[|k_g|\le|\vec k|\le|\ddot r|+c\e^2s^{-1}|\dot r|\le \sqrt{s^{-2}(1+c\e^2)}+c\e^2s^{-1}\sqrt{1+c\e^2}\le s^{-1}(1+c\e)\]
if $\e$ is sufficiently small.
\end{proof}

\vspace{0in}
\bibliography{bill}
\bibliographystyle{plain}

\end{document}